\documentclass[notitlepage,11pt,reqno]{amsart}

\usepackage{amsopn,esint,nicefrac}
\usepackage[final]{hyperref}
\usepackage[T1]{fontenc}
\usepackage{ esint }

\usepackage[utf8]{inputenc}%for appendix
\usepackage{apptools}%for appendix
\AtAppendix{\counterwithin{lemma}{section}} %for appendix
\usepackage[margin=1in]{geometry}
\allowdisplaybreaks
\raggedbottom

\newcommand{\stkout}[1]{\ifmmode\text{\sout{\ensuremath{#1}}}\else\sout{#1}\fi}

\newcommand{\Rn}{\mathbb{R}^N}
 \newcommand{\De} {\Delta}
  \newcommand{\eps} {\varepsilon}

\usepackage{graphicx,enumitem,dsfont,upgreek}
 \usepackage[dvips]{epsfig}
 \usepackage[mathscr]{eucal}
\usepackage{amscd}
\usepackage{amssymb}
\usepackage{amsthm}
\usepackage{amsmath}
\usepackage{latexsym}
\usepackage{dsfont}
\usepackage{upref}
\usepackage{hyperref}

\usepackage{color}
%\usepackage{mdframed}
%\usepackage{refcheck}
%\usepackage{setspace}
%\doublespacing
%\renewcommand{\baselinestretch}{1.5}
%\usepackage{a4wide}
%\usepackage{showlabels}
\theoremstyle{plain}

\newtheorem{thm}{Theorem}[section]
\theoremstyle{plain}
\newtheorem{lemma}[thm]{Lemma}

\newtheorem{cor}[thm]{Corollary}

\theoremstyle{definition}
\newtheorem{defi}{Definition}[section]
\newtheorem{rem}{Remark}[section]
\newtheorem*{maintheorem*}{Main Theorem}
\newtheorem*{maincorollary*}{Main Corollary}
%
% Definition of assumptions
{%
\setcounter{enumi}{0}

\begin{enumerate}}%
{\end{enumerate} }

%
% Definition of assumptions
{%
\setcounter{enumi}{0}

\begin{enumerate}}%
{\end{enumerate} }

\newcommand{\abs}[1]{\ensuremath{\left|#1\right|}}
\newcommand{\Om}{\ensuremath{\Omega}}

\definecolor{darkgreen}{rgb}{0.09, 0.65, 0.27}
\newcommand{\cred}{\color{red}}
\newcommand{\cgr}{\color{darkgreen}}
\newcommand{\cb}{\color{blue}}

\newcommand{\na}{\nabla}
\newcommand{\Depq}{-\Delta_p u-\Delta_q u}

%\newcommand{\BS}{\ensuremath{\Bbb{S}}}

%\newcommand{\eps}{\epsilon}

%%%%%%%%%%%%%%%%%%%%%%%%%%%%%%%%%%%%%%%%%%%%%%%%%%

%\newcommand{\vei}{\ensuremath{v_{i,\eps}}}
%\newcommand{\uep}{\ensuremath{u_{\eps}}}
%\newcommand{\tuepa}{\ensuremath{{v}_{1,\eps}}}
%\newcommand{\tuepb}{\ensuremath{{v}_{2,\eps}}}
%\newcommand{\tuepc}{\ensuremath{{v}_{i,\eps}}}
%\newcommand{\tua}{\ensuremath{{v}_{1}}}
%\newcommand{\tub}{\ensuremath{{v}_{2}}}
%\newcommand{\tuc}{\ensuremath{{v}_{i}}}

\newcommand{\dx}{\ensuremath{\, {\rm d}x}}

%%%%%%%%%%%%%%%%%%%%%%%%%%%%%%%%%%%%%%%%%%%%%%

\numberwithin{equation}{section} \allowdisplaybreaks

\usepackage{cancel,pdfsync}

%\journal{preprint}
%%%%%%%%%%%%%%%%%%%%%%%%%%%%%%%%%%%%%%%%%%%%%%%%%%%%%
\title[Liouville properties for inequalities with $(p,q)$ Laplacian]{Liouville properties for differential inequalities \\ with $(p,q)$ Laplacian operator}

\begin{document}

\author{Mousomi Bhakta, Anup Biswas}
\address{Department of mathematics, Indian Institute of Science Education and Research Pune, Dr.\
Homi Bhabha Road, Pune 411008, India,
email: mousomi@iiserpune.ac.in, \, anup@iiserpune.ac.in}

\author{Roberta Filippucci}
\address{Dipartimento di Matematica e Informatica, Universit\'a degli Studi di Perugia, Via Vanvitelli 1, 06123 Perugia, Italy, email: roberta.filippucci@unipg.it}

\begin{abstract}
In this paper, we establish several Liouville-type theorems for a class of nonhomogenenous quasilinear inequalities.
In the first part,  we prove various Liouville results associated with nonnegative solutions to 
\begin{equation*}\tag{$P_s$}
\Depq\geq u^{s-1} \, \text{ in }\, \Om,
\end{equation*} 
where $1<q<p$, $s>1$ and $\Om$ is any exterior domain of $\Rn$.  In particular, we prove that for $q<N$, inequality $(P_s)$ does not admit any positive solution when $s<q_*$ and  $(P_s)$ admits a positive solution if  $s>q_*$,  where $q_*=\frac{q(N-1)}{N-q}$ is the Serrin exponent for the $q$-Laplacian. Further, we show that when $s=q_*$ and $p<s$ the only nonnegative solution to $(P_s)$ is the trivial solution. On the other hand, for $q\geq N$  we prove that $u\equiv 0$ is the only nonnegative solution for $(P_s)$ for any  $s>1$.

In the second part, we consider the inequality 
\begin{equation*}\tag{$P_{sm}$}
\Depq \geq u^s |\nabla u|^m \quad \text{ in }\Rn,
\end{equation*}
where $1<q<p$,  $N>q$ and $s, \, m\geq 0$.
We prove that, for $\{0\leq m\leq q-1\}\cup\{m>p-1\}$,
the only positive solution to $(P_{sm})$ is constant, provided $s(N-q)+m(N-1)<N(q-1)$. This, in particular, proves that if $\Om=\Rn$ then any nonnegative solution to $(P_s)$ with $1<q<N$ and $1<s<q_*$ 
is the trivial solution. To prove Liouville in the range $0\leq m<q-1$, we first prove an almost optimal lower estimate of any nonnegative supersolution of $(P_{sm})$ and then leveraging this estimate we prove Liouville result. To the best of our knowledge, this technique is completely new and provides an
 alternative approach to the capacity method of Mitidieri-Pohozaev provided higher regularity is available.

\end{abstract}

\keywords{Positive supersolution, bounded solutions, quasilinear equations}
\subjclass[2020]{Primary: 35J60, 35J92, 35B08, 35J70, 35A01}

\maketitle
\setcounter{tocdepth}{1}
%\tableofcontents

\medskip

\noindent

\section{Introduction}

In recent years the study of nonexistence results for non-negative solutions, i.e. {\it Liouville property}, to quasilinear elliptic equations, which can be read as necessary conditions for the existence of solutions, has received considerable attention and significant progress has been achieved in this direction since the monumental result known as Liouville theorem for bounded analytic functions whose first statement was published by Cauchy in a short note in 1844, cfr. \cite{SZ} and \cite{M}. 

It is known that nonlinear Liouville theorems play also an important role in the study of existence results for Dirichlet boundary problems in bounded domains of $\mathbb R^N$ via the blow-up technique, developed by Gidas and Spruck in \cite{GS1}, based on the use a priori bounds for solutions to such equations derived from an application of a Liouville theorem.
% . As a typical application, they are used to obtain

Another powerful application of Liouville theorems is described in \cite{PQS}, where a general method for derivation of pointwise a priori estimates of  solutions, on an arbitrary domain and without any boundary conditions, is developed from Liouville type theorems. Additionally, \cite{PQS} explores a form of equivalence between the local properties of nonnegative solutions to superlinear elliptic problems and Liouville-type theorems.

In the prototype case of the $p$-Laplacian two seminal papers are given by
Mitidieri and Pohozaev \cite {MP} for supersolutions and by Serrin and Zou \cite{SZ} for solutions to 
\begin{equation}\label{pLapl}
    -\Delta_pu=u^{s-1}, \quad u\ge0,\quad\text{in } \mathbb R^N.
\end{equation}
Precisely, in \cite {MP} and \cite{SZ}, respectively for supersolutions and for solutions,  the following sharp thresholds  for Liouville property for $N>p$ are given 
$$1<s\le p_*,\quad   p_*=p(N-1)/(N-p) \quad\text{and}\quad 1<s< p^*,\quad   p^*=pN/(N-p).$$
For the special case of positive solutions when $p=2$, we refer to the celebrated work of Gidas and Spruck \cite{GS}.

The technique used in \cite {MP} is known as  the nonlinear capacity method and it is essentially based on two ingredients, the derivation of  a priori estimates based on the method of test functions and then an asymptotic for this estimates allow us to conclude the result.  In addition, it can be applied to functions  belonging solely to local Sobolev spaces, without reliance on a comparison/maximum principle or assumptions on the behavior at infinity. 

On the other hand, in \cite{SZ} the approach is completely different, precisely an extensive use of the comparison method together with an estimate from below for the solution far from zero yields the result.  We also point out the powerful method of Cutri and Leoni \cite{CL00}, which is based on the maximum principle and also applies to nonlinear operators of Pucci type. This approach was later extended by Armstrong and Sirakov \cite{AS11} and Alarc\'on et al.\ \cite{AGQ} to establish the Liouville property for nonnegative supersolutions in the presence of gradient nonlinearity. For a comprehensive overview of recent developments in the Liouville property, we refer the interested reader to the survey \cite{CG23}.

Moving to exterior domains, that is, domains $\Omega$ satisfying $\Omega \supset \{x : |x| > R\}$ for some $R > 0$,
 Bidaut-V$\acute{\rm e}$ron in \cite{BV} showed that \eqref{pLapl} admits only the trivial solution $u\equiv0$ whenever $N>p$ and $p<s\le p_*$, or $N=p$ and $p<s<\infty$, while
  Bidaut-V$\acute{\rm e}$ron and Pohozaev \cite{BP} showed that any nonnegative supersolution of
  \eqref{pLapl} is the trivial solution $u\equiv0$ whenever $N>p$ and $1<s\le p_*$, or $N=p$ and $1<s<\infty$, and this result is sharp. For the Liouville property to nonnegative supersolutions of
  \eqref{pLapl} when $N<p$ and $s>0$, we refer to \cite[Theorem I$'$]{SZ}.

The Liouville property for the inequality involving a gradient nonlinearity of the form
\begin{equation}\label{pLapl_ge_grad}
-\Delta_p u \geq u^s |\nabla u|^m \quad\text{in } \mathbb{R}^N,
\end{equation}
was first  proved by Mitidieri and Pohozaev in \cite[Theorem 15.1]{MP2001} when the exponents belong to the subcritical range given by 
\begin{equation}\label{critic}
s(N-p)+m(N-1)<N(p-1), \quad s+m>p-1.
 \end{equation}
Later, Filippucci in \cite[Corollary 1]{F} included both the critical case $s(N-q)+m(N-1)=N(q-1)$  and the case $0<s\le p-1-m$, cfr. \cite[Corollaries 1.5, 1.6]{F09}.  For the case $m=0$ in \eqref{pLapl_ge_grad}, we refer to  \cite[Theorem 2]{MP}. All these results are obtained employing the nonlinear capacity method,  while the Hamilton-Jacobi case $s=0$ and $m=1$ or $s=0$ and $m$ subcritical was handled  by Bidaut-V$\acute{\rm e}$ron et al. in \cite[Theorem 2.1]{BVHV} for $p=2$, and for $p>1$ by  Chang et al. in \cite[Theorem 1.2]{CHZ}  by using suitable change of variables  combined with integral a priori estimates.  For a pioneering paper dealing with \eqref{critic} in its radial form for $p = 2$ we refer to 
\cite{CM} (see also \cite{EM96}).  We also mention the work of Burgos-P\'erez et al.\ \cite{BGQ}, where the authors investigate the existence and non-existence of supersolution for $p=2$. A recent work of Goffi \cite{Gof} establishes the non-existence of supersolution at the endpoint case 
$s=0$ and $m=\frac{N(p-1)}{N-1}$, by employing divergence theorem (see also, Theorem~\ref{T1.6} below).

The aim of this paper is to obtain Liouville type theorems for quasilinear inequalities  driven by  the $(p, q)$-Laplace operator, an operator which falls in the class of differential operator with unbalanced growth.
The initial idea to treat such operators
comes from Zhikov \cite{Z}, see also \cite{Ma},  who introduced such classes to provide models of strongly anisotropic materials. Problems involving the  $(p,q)$-Laplacian operator arises from reaction-diffusion systems of the form
$
u_{t}= \mbox{div}[A(u)\nabla u]+c(x,u), $
where  the function $u$ describes the density or concentration of multicomponent substances, while the differential term corresponds to the diffusion with  coefficient $A(u)=|\nabla u|^{ p-2}+|\nabla u|^{q-2}$, whereas the term $c(x,u)$ is the reaction and relates to sources and loss processes. 
Finally, also Born-Infeld equation, that appears in electromagnetism, electrostatics and electrodynamics, by using the Taylor formula, is driven by the multi-phase differential operator.

 On the other hand, $(p,q)$-Laplacian operator is a special case of double phase operator used to model physical phenomena like non-Newtonian fluids and composites with materials having different hardening exponents.

 In this paper we  establish the Liouville property
 for $(p,q)$-Laplacian differential inequalities in two direction: first with a nonlinearity $f$  of the form $f(u)=u^{s-1}$  and in an exterior domain,  then with $f(u,\nabla u)=u^s|\nabla u|^m$ and in the entire $\mathbb R^N$.

%{\cb Our investigation presents several serious difficulties, the first of them is the presence of a nonlinear  nonhomogenenous operator, given by a combination of two operators either singular or degenerate presenting two different growths, which prevents to use arguments typically powerful in the homogeneous case even in the case of a pure power nonlinearity. In addition the presence of a gradient term requires challenging ideas especially in proving a lower bound for the solution.}

 Our analysis encounters several substantial challenges. The foremost difficulty lies in the presence of a nonlinear, nonhomogeneous operator given by a combination of two operators, each either singular or degenerate,  with distinct growth behaviors. This structural complexity precludes the application of standard techniques that are typically effective in the homogeneous setting, even when dealing with purely power-type nonlinearities. Moreover, the presence of a gradient term combined with non-homogeneity introduces additional complications, particularly when building a  barrier-type function  playing the role of a lower bound for the solution, which demands the development of a sophisticated and nontrivial iteration scheme as well as refined estimates, not required in the $p$-Laplacian case, as it will be clear in the proof Lemma \ref{L5.1}.

Throughout the paper, we assume $1<q<p$.

The results obtained highlight the dominant influence of the lower-order term, that is 
$q$-Laplacian operator, which, in some sense, governs the structure of the analysis.

Before starting with the main theorems, we give the following definitions.

\begin{defi}
Let $s>1$. We say  that a non-negative function $u\in C(\Omega)\cap  W^{1,p}_{loc}(\Om)$ is a solution of the differential inequality $-\Delta_p u-\Delta_q u\geq u^{s-1}, \quad u\geq 0\, \text{ in  }\, \Omega$ if
$$\int_{\Om}|\nabla u|^{p-2}\nabla u\cdot\na \varphi \dx+\int_{\Om}|\nabla u|^{q-2}\nabla u\cdot\na \varphi \dx\geq \int_{\Om}u^{s-1}\varphi \dx\quad\forall\,\, \varphi\in C^1_c(\Om), \quad\varphi\geq 0.$$
\end{defi}

\begin{defi}
We say that a non-negative function $u\in C^1(\Om)$ is a solution  of the differential inequality $-\Delta_p u-\Delta_q u\geq f(x,u,\na u), \quad u\geq 0\, \text{ in  }\, \Omega$ if 
$$\int_{\Om}|\nabla u|^{p-2}\nabla u\cdot\na \varphi \dx+\int_{\Om}|\nabla u|^{q-2}\nabla u\cdot\na \varphi \dx\geq \int_{\Om}f(x,u,\na u)\varphi \dx\quad\forall\,\, \varphi\in C^1_c(\Om),\quad\varphi\geq 0,$$
\end{defi}
with $f$ given by above. Now we state the main theorems of this paper. 
\begin{thm}\label{t:qbig}
Let $p>q\geq N$ and $u$ be any nontrivial non-negative solution   to  $$-\Delta_p u-\Delta_q u\geq 0\quad\text{ in } \Rn.$$
Then $u$ is constant. 
\end{thm}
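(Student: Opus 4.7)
The plan is to apply the nonlinear capacity method with the test function $\varphi = (u+\delta)^{-(p-1)}\eta^{k}$, where $\delta>0$ is a small regularization parameter, $k\geq p$, and $\eta\in C_c^1(\Rn)$ is a standard cutoff with $0\le\eta\le 1$, $\eta\equiv 1$ on $B_R$, $\supp \eta\subset B_{2R}$, and $|\nabla\eta|\le C/R$ (to be replaced by a logarithmic cutoff when $q=N$). Since the right-hand side of the inequality vanishes, the usual choice $\varphi=\eta^k$ yields no useful information; the negative power $-(p-1)$ is what makes $\nabla\varphi$ produce a favourable sign-definite term. The function $\varphi$ is bounded (by $\delta^{-(p-1)}$) and lies in $W^{1,p}$ with compact support, so it is admissible via a standard mollification argument.

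Plugging $\varphi$ into the weak form and moving the sign-definite terms to the left, one obtains a remainder consisting of two products of the form $|\nabla u|^{p-1}|\nabla\eta|(u+\delta)^{-(p-1)}\eta^{k-1}$ and its $q$-analogue. Young's inequality with exponents $\bigl(p,p/(p-1)\bigr)$ on the first and $\bigl(q,q/(q-1)\bigr)$ on the second absorbs a small multiple of each main integral into the left-hand side. The choice of exponent $p-1$ in $\varphi$ is precisely what makes the $p$-type remainder reduce to the pure cutoff integral $\int|\nabla\eta|^p\eta^{k-p}\,\dx$; the $q$-type remainder carries an extra factor $(u+\delta)^{q-p}$, which is bounded above by $\delta^{q-p}$ since $q<p$. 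This yields the Caccioppoli-type estimate
\[
\int\bigl(|\nabla u|^p+|\nabla u|^q\bigr)(u+\delta)^{-p}\eta^k\,\dx \;\le\; C\!\int|\nabla\eta|^p\eta^{k-p}\,\dx \;+\; C\delta^{q-p}\!\!\int|\nabla\eta|^q\eta^{k-q}\,\dx,
\]
with $C$ depending only on $p,q,k$.

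As $R\to\infty$ with $\delta>0$ fixed, both right-hand side integrals vanish: the first is $O(R^{N-p})\to 0$ since $p>q\ge N$ forces $p>N$, and the second is $O(R^{N-q})\to 0$ when $q>N$ (or $O((\log R)^{1-N})\to 0$ in the critical case $q=N$ via a logarithmic cutoff). By monotone convergence, the left-hand integral over all of $\Rn$ must then vanish, forcing $|\nabla u|=0$ almost everywhere and hence $u$ constant. The main difficulty is the critical case $q=N$, where one must use a logarithmic cutoff and carefully track the Young-inequality constants so that they remain independent of $R$ (the $\delta$-dependence on the right is harmless because the limit $R\to\infty$ is taken first, with $\delta$ held fixed); a minor technicality is the admissibility of $\varphi$, which is handled by approximating $u$ through mollification since $\varphi$ is bounded and lies in $W^{1,p}$ with compact support.
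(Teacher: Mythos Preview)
Your proof is correct and proceeds along a genuinely different line from the paper's. The paper argues via the comparison principle: after invoking the strong maximum principle (Lemma~\ref{R1}) to ensure $u>0$, it fixes an arbitrary point $x_0$, builds the explicit subsolution $v(x)=\kappa_m+\varepsilon\bigl(-\log|x-x_0|+\log(1/m)\bigr)$ on the exterior of a small ball $B_{1/m}(x_0)$ (here $-\Delta_p(-\log|x|)-\Delta_q(-\log|x|)\le 0$ precisely because $p>q\ge N$), compares $u\ge v$ via Lemma~\ref{l:comp1}, and sends $\varepsilon\to 0$, $m\to\infty$ to obtain $u(x)\ge u(x_0)$ for every $x$; since $x_0$ is arbitrary, $u$ is constant. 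Your capacity argument with the test function $(u+\delta)^{1-p}\eta^{k}$ is the integral counterpart (essentially the strategy of Mitidieri--Pohozaev \cite{MP2001} cited in the paper): it trades the barrier construction and comparison principle for a Caccioppoli-type inequality and avoids any appeal to the strong maximum principle, since the $\delta$-shift makes positivity of $u$ irrelevant. The paper's route is shorter and more geometric, exploiting the explicit fundamental-type subsolution $-\log|x|$ that exists exactly in the regime $q\ge N$; your route is more robust in that it needs neither the comparison lemma nor the strong maximum principle, and it makes the critical case $q=N$ transparent via the logarithmic cutoff rather than through the borderline behaviour of a barrier.
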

The above theorem generalizes \cite[Theorem {16.1}]{MP2001},  see also  \cite[Theorem 2]{MP_Bern}, \cite[Theorem 2.6]{BD02} and \cite[Theorem II-(a)]{SZ}, 
 from 
$p$-superharmonic functions to $(p,q)$-superharmonic functions  in the class $u\in C(\Omega)\cap  W^{1,p}_{loc}(\Om)$.  Liouville-type theorems of the above form have been studied in a much broader context and established using various methods, namely: via Hadamard three circle
theorem in \cite{PW-book},
Caccioppoli estimates in \cite{Moon}, stochastically incompleteness property in \cite{Gri}, and the divergence theorem in \cite{RS01} (see also \cite{PRS08} for several associated equivalence relations).

\begin{rem}
When $N\leq q<p$, the inequality $-\Delta_p u-\Delta_q u\geq 0$ has the (bounded) positive non-constant
solution $u=1-\frac{1}{|x|}$ in the exterior domain $\{|x|>1\}$, which indicates the necessity of considering Theorem~\ref{t:qbig} on the entire space $\Rn$.
\end{rem}

\iffalse
\begin{defi}
A domain $\Omega$ is called an exterior domain if $\Omega \supset\{x: |x|>R\}$ for some $R>0$.
\end{defi}
\fi

Our next result deals with the general exterior domains and gives a lower estimate from below for  $(p,q)$-superharmonic functions.  For homogeneous elliptic operators similar lower bound estimate can also be found in \cite{AS11a,AS11}.

\begin{lemma}\label{L1.5}
Let $\Omega$ be an exterior domain  and $u$ be a positive solution to the differential inequality $$\Depq\geq 0 \quad\text{ in }\, \Omega.$$ Then the following holds:

(i) If $1<q<N$ then for any positive $\theta$ satisfying
$\theta\geq \frac{N-q}{q-1}$,  there exists a constant $\kappa>0$ such that $$u(x)\geq \kappa|x|^{-\theta}
%\frac{\kappa}{|x|^\theta} 
\quad\text{ in }\, |x|>R,$$
where $\kappa$ depends on $\theta, \Om$ and $u$.

(ii) If $q\geq N$ then $$\lim\inf_{|x|\to\infty}u(x)>0.$$
\end{lemma}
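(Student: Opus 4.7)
My plan is to prove both parts by weak comparison for the monotone $(p,q)$-Laplace operator $\xi\mapsto(|\xi|^{p-2}+|\xi|^{q-2})\xi$, but the constructions on each side differ sharply. For part (i), I would build explicit radial sub-solutions on annuli and let the outer radius diverge. A direct computation gives
$$-\Delta_q(|x|^{-\theta}) = \theta^{q-1}\bigl[N-q-\theta(q-1)\bigr]|x|^{-1-(\theta+1)(q-1)},$$
and similarly for $-\Delta_p(|x|^{-\theta})$. Since the hypothesis $\theta\geq(N-q)/(q-1)$ together with $p>q>1$ forces $\theta\geq(N-p)/(p-1)$ as well (the difference $(N-q)/(q-1)-(N-p)/(p-1)$ equals $(p-q)(N-1)/[(p-1)(q-1)]\geq 0$), both Laplacians are non-positive, so $|x|^{-\theta}$ and any translate $w_{R_2}(x):=c(|x|^{-\theta}-R_2^{-\theta})$ is a $(p,q)$-subsolution. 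Fix $R_1>R$ such that $u>0$ on $\{|x|\geq R_1\}$ and set $m:=\min_{|x|=R_1}u>0$. Choosing $c=m/(R_1^{-\theta}-R_2^{-\theta})$ yields $w_{R_2}=m\leq u$ on $|x|=R_1$ and $w_{R_2}=0\leq u$ on $|x|=R_2$, so the weak comparison principle on the annulus $\{R_1<|x|<R_2\}$ (applied with test function $(w_{R_2}-u)^+\in W^{1,p}_0$ and the usual monotonicity trick) gives $u\geq w_{R_2}$. Letting $R_2\to\infty$ produces $u(x)\geq mR_1^\theta|x|^{-\theta}$ for $|x|\geq R_1$, which is the stated inequality with $\kappa=mR_1^\theta$ (the range $R<|x|<R_1$ is covered by continuity and compactness, possibly after shrinking $\kappa$).

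For part (ii) the sub-solutions from part (i) only yield lower bounds that vanish at infinity, so a different approach is needed. I would test the supersolution inequality directly with $\varphi=\eta_R^p(m-u)^+$, where $m=\min_{|x|=R_1}u>0$ and $\eta_R$ is a non-negative cutoff; note $(m-u)^+$ vanishes on $\partial B_{R_1}$, so $\varphi$ is a legitimate non-negative test function in $\{|x|>R_1\}$. Expanding $\nabla\varphi$ and absorbing the $\eta_R^p|\nabla u|^p$ and $\eta_R^p|\nabla u|^q$ terms via Young's inequality yields
$$\int_{\{u<m\}}\eta_R^p\bigl(|\nabla u|^p+|\nabla u|^q\bigr)\leq C\Bigl(m^p\int|\nabla\eta_R|^p+m^q\int\eta_R^{p-q}|\nabla\eta_R|^q\Bigr).$$
For $q>N$ a standard polynomial cutoff supported in $\{R\leq|x|\leq 2R\}$ gives $\int|\nabla\eta_R|^s\lesssim R^{N-s}\to 0$ for $s\in\{p,q\}$; for the borderline case $q=N$ I would switch to the logarithmic cutoff $\eta_R(x)=\phi(\log|x|/\log R)$ on $\{R\leq|x|\leq R^2\}$, which satisfies $\int|\nabla\eta_R|^N\lesssim(\log R)^{1-N}\to 0$, and the $p$-part remains controlled by $R^{N-p}(\log R)^{-p}\to 0$ since $p>N$. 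Passing to the limit forces $\nabla u\equiv 0$ on $\{u<m\}\cap\{|x|>R_1\}$, so by continuity $u$ is constant on each connected component $C$ of this set; taking boundary limits (at an interior point of $\{|x|>R_1\}$, where $u=m$, or on $\partial B_{R_1}$, where $u\geq m$) contradicts $u|_C<m$ unless $C=\emptyset$. Hence $u\geq m$ throughout $\{|x|>R_1\}$, giving $\liminf_{|x|\to\infty}u(x)\geq m>0$.

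The main technical obstacle is the borderline regime $q=N$ of part (ii): a polynomial cutoff would leave $\int|\nabla\eta_R|^N\sim R^{N-N}=1$ and fail to vanish, so the proof must exploit the logarithmic scale intrinsic to the $N$-Laplacian by means of a cutoff of the form $\phi(\log|x|/\log R)$. A secondary point to verify is that the Young-inequality absorption in part (ii) can be carried out with a constant independent of $R$, and that $\varphi=\eta_R^p(m-u)^+$ can be approximated by admissible test functions in $C^1_c(\Omega)$, which uses both the vanishing of $(m-u)^+$ on the inner sphere and the compact support of $\eta_R$.
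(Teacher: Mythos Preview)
Your proof of part~(i) is correct and essentially the paper's argument: both produce the radial barrier $c|x|^{-\theta}$, verify it is a $(p,q)$-subsolution under the hypothesis $\theta\geq(N-q)/(q-1)$ (which, as you note, automatically forces $\theta\geq(N-p)/(p-1)$), and run weak comparison on annuli. The only cosmetic difference is that the paper subtracts a fixed constant $\varepsilon$ and lets $\varepsilon\to0$, while you subtract $cR_2^{-\theta}$ and let $R_2\to\infty$; since the $(p,q)$-Laplacian ignores additive constants, the two are interchangeable.

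For part~(ii) your Caccioppoli-type argument with test function $\eta_R^p(m-u)^+$, together with the logarithmic cutoff in the borderline case $q=N$, is correct---but the paper takes a much shorter route that you may have overlooked. The key observation is that when $q\geq N$ the threshold $(N-q)/(q-1)$ appearing in part~(i) is $\leq 0$, so the conclusion of part~(i) actually applies for \emph{every} $\theta>0$: one gets $u(x)\geq k_\theta|x|^{-\theta}$ in $\{|x|>R\}$ with $k_\theta=R^{\theta}\min_{|x|=R}u$. Sending $\theta\downarrow 0$ gives $u(x)\geq\min_{|x|=R}u>0$ for all $|x|>R$, and part~(ii) follows. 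This bootstraps directly from part~(i), avoids any integral estimate, and handles $q>N$ and $q=N$ uniformly with no need for a logarithmic scale. Your approach, by contrast, is closer in spirit to classical capacity arguments for $p$-superharmonic functions and has the merit of being intrinsic to the inequality (no auxiliary barriers), so it would transfer more readily to settings where explicit radial subsolutions are not available.
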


 In the next result we prove that if $u$ is a nonnegative supersolution to 
$\Depq = u^s |\na u|^m$ in an exterior domain $\Om$ then we can further improve the lower estimate of $u$. 

\begin{lemma}\label{L5.1}
Let  $1<q<N$, $s\geq 0$, $0\leq m\leq  q-1$ and 
\begin{equation}\label{L5.1A}
s(N-q)+m(N-1)<N(q-1).
 \end{equation}
Let $\Omega$ be an exterior domain and let   $u\in C(\bar\Om)\cap C^1(\Om)$ be a positive solution  to
\begin{equation}\label{grad-non-Om}
\Depq \geq u^s |\na u|^m \quad \mbox{in }\; \Om.
\end{equation}
Then, for any $\theta>0$,  there exists a constant $C>0$ such that
\begin{equation}\label{L5.1B}
u(x)\geq C |x|^{-\theta}\quad \text{in}\;\; \bar\Om\cap\{|x|\geq 1\},
\end{equation}
where $C$ depends on $\theta, \Om$ and $u$.
\end{lemma}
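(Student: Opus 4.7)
I would prove the estimate by iteratively refining the lower bound from Lemma~\ref{L1.5}. Applying Lemma~\ref{L1.5}(i) with $\theta=(N-q)/(q-1)=:\theta_{0}$ provides the base case
\[
u(x)\geq \kappa_{0}|x|^{-\theta_{0}}\quad\text{for } |x|\geq R_{0}.
\]
From this, the plan is to construct inductively a sequence $\theta_{0}>\theta_{1}>\theta_{2}>\cdots$ with $\theta_{k}\searrow 0$, and constants $\kappa_{k},R_{k}$, such that $u(x)\geq\kappa_{k}|x|^{-\theta_{k}}$ on $\bar\Omega\cap\{|x|\geq R_{k}\}$. Since $\theta_{k}\to 0$, every prescribed $\theta>0$ exceeds some $\theta_{k}$, delivering \eqref{L5.1B}.

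\textbf{Inductive step via radial barriers.} For the passage from level $k$ to level $k+1$, fix a small $\theta_{k+1}\in(0,\theta_{k})$ and consider the radial function $w(r)=a r^{-\theta_{k+1}}$. A direct calculation gives
\[
-\Delta_{p}w-\Delta_{q}w=a^{p-1}\theta_{k+1}^{p-1}\bigl(N-p-\theta_{k+1}(p-1)\bigr)r^{-\theta_{k+1}(p-1)-p}+(q\text{-analogue}),
\]
and $w^{s}|\nabla w|^{m}=a^{s+m}\theta_{k+1}^{m}r^{-\theta_{k+1}(s+m)-m}$. For $r$ large the $q$-Laplace term dominates the left-hand side; because $m\leq q-1<q$, the exponent $\theta_{k+1}(q-1)+q$ strictly exceeds $\theta_{k+1}(s+m)+m$ once $\theta_{k+1}$ is small enough (with the admissible upper bound on $\theta_{k+1}$ dictated by the subcriticality \eqref{L5.1A} in the regime $s+m>q-1$). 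Consequently $w$ is a \emph{subsolution} of $-\Delta_{p}w-\Delta_{q}w=w^{s}|\nabla w|^{m}$ on an outer annulus $\{R_{k+1}\leq|x|\leq R'\}$ once $R_{k+1}$ is taken sufficiently large; the amplitude $a$ is then adjusted so that the inner matching condition $aR_{k+1}^{-\theta_{k+1}}\leq\kappa_{k}R_{k+1}^{-\theta_{k}}$ holds.

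\textbf{Comparison and passage to the limit.} I would then invoke a comparison principle for the $(p,q)$-Laplacian with gradient source on the annulus $\{R_{k+1}\leq|x|\leq R'\}$. To avoid losing the estimate on the outer sphere, I would replace $w$ by the truncation $a(|x|^{-\theta_{k+1}}-R'^{-\theta_{k+1}})_{+}$, which vanishes at $|x|=R'$ so that the outer boundary comparison is trivial, while on the inner sphere it is ensured by the calibration of $a$. Sending $R'\to\infty$, the truncation converges pointwise to $a|x|^{-\theta_{k+1}}$, yielding the desired inductive bound $u(x)\geq\kappa_{k+1}|x|^{-\theta_{k+1}}$ on $\bar\Omega\cap\{|x|\geq R_{k+1}\}$ with $\kappa_{k+1}=a$. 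Chaining the steps produces the sequence of estimates with $\theta_{k}\to 0$.

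\textbf{Main obstacle.} The comparison step is the most delicate part of the argument. Since $-\Delta_{p}-\Delta_{q}$ is non-homogeneous and the source $u^{s}|\nabla u|^{m}$ depends on $|\nabla u|$, standard comparison theorems do not apply directly, and one must establish an ad~hoc comparison principle tailored to the present inequality. Furthermore, the truncation of the radial barrier shrinks the right-hand side $w^{s}|\nabla w|^{m}$ while preserving the left-hand side, so the subsolution property has to be re-verified in the transitional region $|x|\approx R'$, typically by restricting to $\{R_{k+1}\leq|x|\leq\sigma R'\}$ with $\sigma\in(0,1)$ and controlling the error in the limit. These technical ingredients, which are unnecessary in the homogeneous $p$-Laplacian case, are precisely the refined estimates and sophisticated iteration scheme flagged in the introduction, and they constitute the crux of the lemma.
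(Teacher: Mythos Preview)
Your overall strategy---iterated radial barriers plus comparison---matches the paper's, but the comparison step has a genuine gap that cannot be patched in the way you suggest. The source $v^{s}|\nabla v|^{m}$ is \emph{increasing} in $v$, so the operator $v\mapsto -\Delta_{p}v-\Delta_{q}v-v^{s}|\nabla v|^{m}$ is not proper and no comparison principle holds between a super- and a subsolution of \eqref{grad-non-Om}. Concretely, if $u-w_{R'}$ attains a negative interior minimum $-\delta$ at $x_{0}$, then Lemma~\ref{L-vis} gives only
\[
-\Delta_{p}w(x_{0})-\Delta_{q}w(x_{0})\ \geq\ u(x_{0})^{s}\,|\nabla w(x_{0})|^{m}\ =\ (w_{R'}(x_{0})-\delta)^{s}\,|\nabla w(x_{0})|^{m},
\]
which is perfectly compatible with your strict subsolution inequality $-\Delta_{p}w-\Delta_{q}w<w^{s}|\nabla w|^{m}$ since $w_{R'}-\delta<w$; no contradiction arises. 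Restricting to $\{|x|\le\sigma R'\}$ does not help either: on the new outer sphere $|x|=\sigma R'$ the inductive bound $u\ge\kappa_{k}|x|^{-\theta_{k}}$ decays \emph{faster} than $w_{R'}\sim R'^{-\theta_{k+1}}$ (because $\theta_{k}>\theta_{k+1}$), so the required boundary inequality $u\ge w_{R'}$ fails for large $R'$.

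The paper's resolution is to \emph{freeze} the zero-order factor before comparing. From the current bound $u\ge C_{\sigma}|x|^{-\sigma}$ one deduces
\[
-\Delta_{p}u-\Delta_{q}u\ \ge\ \tilde C_{\sigma}\,|x|^{-s\sigma}\,|\nabla u|^{m},
\]
an inequality whose right-hand side no longer depends on the unknown itself. The barrier $\psi=\kappa\varphi_{\sigma_{1}}-\varepsilon$ is then built as a strict subsolution of $-\Delta_{p}v-\Delta_{q}v=\tilde C_{\sigma}|x|^{-s\sigma}|\nabla v|^{m}$; subtracting $\varepsilon$ leaves this inequality intact, and at a touching point the gradients coincide and the two frozen right-hand sides are identical, so Lemma~\ref{L-vis} yields an honest contradiction. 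This freezing is also why the iteration is genuine rather than cosmetic: the exponent condition for the barrier becomes $(\sigma_{1}+1)(q-1)+1>s\sigma+m(\sigma_{1}+1)$, i.e.\ (for $m<q-1$) $\sigma_{1}>\mathfrak g(\sigma):=\dfrac{s\sigma-1}{q-1-m}-1$, so each step can only lower $\sigma$ to just above $\mathfrak g(\sigma)$. Condition~\eqref{L5.1A} is precisely $\mathfrak g(\sigma_{0})<\sigma_{0}$, which makes the sequence $\sigma_{k}=\mathfrak g(\sigma_{k-1})$ strictly decreasing until it lands in the regime $s\sigma\le q-m$, after which any target $\theta>0$ is reachable in one further step. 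By contrast, your subsolution test $\theta_{k+1}(q-1)+q>\theta_{k+1}(s+m)+m$ does not involve $\theta_{k}$ at all, so the ``iteration'' in your plan is only apparent---a symptom that the freezing mechanism, which is the heart of the argument, is missing.
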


The above lemma is one of the key ingredients in
proving Theorems~\ref{t:s2} and ~\ref{T1.7}. Proof of Lemma~\ref{L5.1} follows an iteration method where we improve the lower bound  for $u$ in every iteration. Interestingly,  \eqref{L5.1A} becomes crucial for us to initiate the iteration. 
%{\cgr For the Laplace equations, that is, for $p=q=2$, \eqref{L5.1A} is considered as a subcritical region of exponents, see \cite{BVHV, BGQ,CM}.}{\cb Actually, it is subcritical also for $p$-Laplacian, cfr. \eqref{critic}}
Furthermore, we point out that \eqref{L5.1A} represents the subcritical range (cfr. \eqref{critic} ) respect to $q$.

\begin{thm}\label{t:s1}
Let $\Omega$ be an exterior domain in $\Rn$ with 
$$q<N \quad\text{and}\quad 1<q<p<s\le  q_*=\frac{q(N-1)}{N-q}.$$ Then, any nonnegative solution $u$ of the differential inequality  
\begin{equation}\label{main1}
\Depq \geq u^{s-1}
%,\quad u\geq 0
\quad\text{ in }\, \Omega
\end{equation}
is  the trivial solution $u\equiv 0$.
\end{thm}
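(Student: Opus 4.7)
The plan is to argue by contradiction via the $u^{-\beta}$ nonlinear capacity method of Mitidieri--Pohozaev, reinforced with the pointwise lower bound of Lemma~\ref{L1.5}. Suppose $u\not\equiv 0$. Applied to $\Depq\ge 0$, the strong minimum principle for $(p,q)$-superharmonic functions forces $u>0$ throughout $\Omega$; Lemma~\ref{L1.5}(i) then supplies $u(x)\ge\kappa|x|^{-\theta_0}$ for $|x|\ge R_0$, where $\theta_0:=(N-q)/(q-1)$ satisfies the crucial identity $\theta_0(q_*-1)=N$. In the strict subcritical range $s<q_*$, Lemma~\ref{L5.1} applied with $m=0$ (its hypothesis $(s-1)(N-q)<N(q-1)$ being equivalent to $s<q_*$) refines this to $u(x)\ge C_\theta|x|^{-\theta}$ for arbitrarily small $\theta>0$.

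Next, I test the weak inequality with $\varphi=u^{-\beta}\xi_R^\lambda$ for small $\beta\in(0,q-1)$, large $\lambda$, and a smooth cutoff $\xi_R\in C_c^\infty(\Omega\cap B_{2R})$ equal to $1$ on $B_R\cap\{|x|>2R_0\}$. The $-\beta u^{-\beta-1}\nabla u\,\xi_R^\lambda$ part of $\nabla\varphi$ produces a favorable term $\beta\!\int(|\nabla u|^p+|\nabla u|^q)u^{-\beta-1}\xi_R^\lambda$ on the left, while the $\lambda u^{-\beta}\xi_R^{\lambda-1}\nabla\xi_R$ part is controlled via the Young split
\[
|\nabla u|^{r-1}u^{-\beta}\xi_R^{\lambda-1}|\nabla\xi_R|\le\epsilon|\nabla u|^r u^{-\beta-1}\xi_R^\lambda+C_\epsilon u^{r-1-\beta}\xi_R^{\lambda-r}|\nabla\xi_R|^r,\qquad r\in\{p,q\},
\]
with $\epsilon$ chosen to be absorbed into the good terms. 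This yields the master inequality
\[
I_R:=\int u^{s-1-\beta}\xi_R^\lambda\dx \le C\!\int u^{p-1-\beta}\xi_R^{\lambda-p}|\nabla\xi_R|^p\dx+C\!\int u^{q-1-\beta}\xi_R^{\lambda-q}|\nabla\xi_R|^q\dx.
\]

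A second round of H\"older (with conjugate pairs $\bigl(\tfrac{s-1-\beta}{r-1-\beta},\tfrac{s-1-\beta}{s-r}\bigr)$ for $r\in\{p,q\}$, legitimate because $s>p>q$ and $\beta$ is small) followed by Young's inequality absorbs the resulting $I_R^{(r-1-\beta)/(s-1-\beta)}$ factors back into the left, giving $I_R\le C(J_p(R)+J_q(R))$ with $J_r(R)\lesssim R^{N-r(s-1-\beta)/(s-r)}$ on the outer annulus plus an $R$-independent inner-boundary piece. Because $s\le q_*<p_*$, the exponent in $J_p(R)$ is strictly negative for small $\beta$ and $J_p(R)$ stays bounded. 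In the strict subcritical range $s<q_*$ the $q$-exponent is also negative, so $I_R\le C$ uniformly in $R$; Fatou then gives $\int_\Omega u^{s-1-\beta}<\infty$, while the lower bound from Lemma~\ref{L1.5} forces $\int_{|x|>R_0}u^{s-1-\beta}\ge c\int_{R_0}^\infty r^{\beta\theta_0-1}\dr=\infty$ as soon as $\beta>0$, since $\theta_0(s-1-\beta)<\theta_0(q_*-1)=N$. This contradiction closes the subcritical range.

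The principal obstacle is the critical case $s=q_*$: the outer-annulus exponent of $J_q$ becomes exactly $q\beta/(q_*-q)=\beta\theta_0>0$, so the upper bound $I_R\lesssim R^{\beta\theta_0}$ matches precisely the lower bound $I_R\gtrsim R^{\beta\theta_0}$ coming from Step~1, and the standard cutoff argument ties. To break this balance I would replace the linear cutoff by a logarithmic one, $\xi_R(x)=\eta(\log|x|/\log R)$ with $|\nabla\xi_R|\lesssim(|x|\log R)^{-1}$, which inserts an extra factor $(\log R)^{-q(s-1-\beta)/(s-q)}$ into $J_q$ without affecting the polynomial rate of the lower bound; combined, if needed, with a bootstrap of Lemma~\ref{L1.5} obtained by feeding the enhanced source $u^{s-1}\ge c|x|^{-N}$ back into the differential inequality, this should produce the strict improvement necessary to close the critical case.
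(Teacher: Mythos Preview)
Your subcritical argument ($s<q_*$) is correct and coincides with the paper's method in substance: the ``master inequality'' you derive is precisely the content of Lemma~\ref{L1.8} (the paper's test function $u^{-d}\zeta^k$ is your $u^{-\beta}\xi_R^\lambda$ with $d=\beta$), and your second H\"older--Young round is the paper's Case~1 there. The paper then extracts a pointwise upper bound by averaging over far-away balls and compares it with the Lemma~\ref{L1.5} lower bound, whereas you stay at the integral level; both routes give the same contradiction. (Your opening invocation of Lemma~\ref{L5.1} is superfluous here---you only actually use the $\theta_0$-bound from Lemma~\ref{L1.5}, and the displayed exponent $\beta\theta_0-1$ in your lower bound is the $s=q_*$ value; for $s<q_*$ the exponent is larger, so the divergence is easier.)

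The genuine gap is the critical case $s=q_*$. The logarithmic cutoff does not close the argument as you describe. With $\xi_R$ supported in $\{R_0<|x|<R\}$, equal to $1$ on $\{|x|<R^{1/2}\}$, and $|\nabla\xi_R|\lesssim(|x|\log R)^{-1}$ on the outer annulus, one finds $J_q\sim(\log R)^{-(N-\beta\theta_0)}R^{\beta\theta_0}$ while the lower bound over $\{|x|<R^{1/2}\}$ gives only $I_R\gtrsim R^{\beta\theta_0/2}$; since $(\log R)^{N-\beta\theta_0}\ll R^{\beta\theta_0/2}$ for any fixed $\beta>0$, no contradiction follows. Sending $\beta\to 0$ with $R$ does not help either, because the absorption constant in your master inequality blows up like $\beta^{-(q-1)}$ (from the Young step with $\epsilon\sim\beta$), overwhelming the logarithmic gain. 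Your bootstrap remark---feeding $u^{q_*-1}\ge c|x|^{-N}$ back to obtain a $(\log|x|)^{1/(q-1)}$ improvement on the lower bound---is more promising, but you have not carried it out, and for the non-homogeneous $(p,q)$-operator the required subsolution construction and comparison are not routine.

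For comparison, the paper handles $s=q_*$ by a different mechanism, following Serrin--Zou. From the master inequality one first extracts volume gradient bounds $\int_{B_R}(|\nabla u|^{p-1}+|\nabla u|^{q-1})\,\dx\lesssim R$ on annuli; a mean-value argument in the radial variable then yields a sequence $R_j\to\infty$ along which the \emph{spherical} integrals $\int_{\partial B_{R_j}}(|\nabla u|^{p-1}+|\nabla u|^{q-1})\,d\theta$ stay bounded. Integrating the differential inequality over $B_{R_j}\setminus B_{R_0}$ and applying the divergence theorem gives $\int_{B_{R_j}\setminus B_{R_0}}u^{q_*-1}\,\dx\le C$ uniformly in $j$, while the lower bound $u^{q_*-1}\ge\kappa|x|^{-N}$ forces the left side to diverge like $\log R_j$---the required contradiction. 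This spherical-flux argument is the missing ingredient in your proposal.
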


\begin{rem}\label{r:1'}
We first point out that  conditions on the exponents assumed in  Theorem~\ref{t:s1}(i) produce an upper bound for $N$ given by $N<\frac{q(p-1)}{p-q}$.  However, thanks to Theorem~\ref{t:s2} we see that the upper bound on $N$ is actually not applicable when $s<q_*$

Furthermore, Theorem~\ref{t:s1} generalizes  the result of Bidaut-V\'eron and Pohozaev \cite[Theorems 3.3 (iii) and 3.4 (ii)]{BVP} (also see \cite[Theorem I]{SZ}) from $p$-Laplacian operator to $(p,q)$-Laplacian operator. Note that, in the case of $(p,q)$-Laplacian operator, the critical exponent $q_*$ remains the same as it was for $q$-Laplace operator. In the next remark we show that $q_*$ is indeed the critical exponent for \eqref{main1}.

\end{rem}
\begin{rem}\label{r:1}
In Theorem~\ref{t:s1}, $q_*$ is the critical exponent for \eqref{main1} in the sense that for  $t>q_*$, there exists $R_0>0$ and a positive function $\varphi$ satisfying \eqref{main1} in $|x|>R_0$.

To see this,  consider $N>q$, $t>q_*$ and $\theta_*=\frac{N-q}{q-1}$. Thus, $\theta_*>\frac{N-p}{p-1}$. Further, since $$-\theta_*(q-1)-q=-\theta_*(q_*-1)>-\theta_*(t-1),$$ we can choose
$\theta\in(0, \theta_*)$ such that $-\theta(q-1)-q>-\theta(t-1)$ and $\theta>\frac{N-p}{p-1}$. Now, letting $$\varphi(x)=k|x|^{-\theta}$$ it is easy to check that
(see the proof of Lemma~\ref{L1.5})
$$-\Delta_p \varphi(x)-\Delta_q \varphi(x)=  k^{p-1}c_{n, p, \theta} |x|^{-\theta(p-1)-p}+k^{q-1} c_{n, q,\theta} |x|^{-\theta(q-1)-q} .$$
for some constants $c_{n, p,\theta}$ and $c_{n, q,\theta}$. Moreover, $c_{n, q,\theta}>0>c_{n, p,\theta}$. We choose $k>0$ such that $k^{q-1} c_{n, q,\theta}\geq 2$. 
As $\theta(p-1)+p>\theta(q-1)+q$, we can find $R_0\geq 1$ large enough
so that 
$$-\Delta_p \varphi(x)-\Delta_q \varphi(x)\geq k^{q-1}\frac{c_{n,q,\theta}}{2} |x|^{-\theta(q-1)-q}\quad \text{for}\; |x|\geq R_0.$$
Also, from our choice of $\theta$, we have $|x|^{-\theta(q-1)-q}\geq |x|^{-\theta(t-1)}$ for $|x|\geq 1$. Hence
$$-\Delta_p \varphi(x)-\Delta_q \varphi(x)\geq k^{q-1}\frac{c_{n,q,\theta}}{2} \varphi^{t-1}\geq \varphi^{t-1}\quad \text{for}\; |x|\geq R_0.$$

\end{rem}

Combining Theorem~\ref{t:s1} and Remark~\ref{r:1},  the following corollary holds.

\begin{cor} If 
$\Om$ is an exterior domain, $1<q<N$ and $q<p<s$, then the differential inequality $\Depq \geq u^{s-1}$ has a positive solution if and only if  $s>q_*$.
\end{cor}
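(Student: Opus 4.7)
The corollary is a near-immediate packaging of Theorem~\ref{t:s1} (the nonexistence half) together with Remark~\ref{r:1} (the existence half), so my plan is simply to invoke each of these in the appropriate direction; no new analytic machinery is required.

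For the ``only if'' implication, suppose the inequality $\Depq \geq u^{s-1}$ has a positive solution $u$ on an exterior domain $\Om$ under the standing hypotheses $1 < q < N$ and $q < p < s$. If we additionally had $s \leq q_*$, then every assumption of Theorem~\ref{t:s1} would be met, forcing $u \equiv 0$ and contradicting positivity. Therefore $s > q_*$ is necessary.

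For the ``if'' implication, assume $s > q_*$ and apply the construction of Remark~\ref{r:1} with $t := s$. This yields $R_0 \geq 1$, a constant $k > 0$, and an exponent $\theta \in \bigl(\tfrac{N-p}{p-1},\tfrac{N-q}{q-1}\bigr)$ such that $\varphi(x) = k|x|^{-\theta}$ is smooth and positive on $\{|x| > R_0\}$ and satisfies
\begin{equation*}
-\Delta_p \varphi(x) - \Delta_q \varphi(x) \;\geq\; \varphi(x)^{s-1} \qquad \text{for every } |x| \geq R_0
\end{equation*}
in the pointwise sense. Since $\Om_0 := \{|x| > R_0\}$ is itself an exterior domain and $\varphi$ is smooth on $\Om_0$, multiplying this inequality by any nonnegative $\psi \in C^1_c(\Om_0)$ and integrating by parts (no boundary terms arise, as $\psi$ is compactly supported in $\Om_0$) recovers the weak formulation appearing in the definition of solution. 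Hence $\varphi$ is a positive solution on the exterior domain $\Om_0$, which proves sufficiency.

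The only point worth flagging as a potential obstacle is the bookkeeping check that Remark~\ref{r:1} may be invoked with $t = s$ despite the additional hypothesis $p < s$ of the corollary: but the construction in that remark only uses $t > q_*$ and imposes no constraint linking $t$ to $p$, so the issue is vacuous. All the substantive content --- the nonlinear capacity arguments supporting Theorem~\ref{t:s1} and the explicit computation of $-\Delta_p(|x|^{-\theta})-\Delta_q(|x|^{-\theta})$ carried out via Lemma~\ref{L1.5} --- is already in place.
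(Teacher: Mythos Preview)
Your proof is correct and follows exactly the paper's approach: the paper simply states that the corollary holds by combining Theorem~\ref{t:s1} (nonexistence when $s\le q_*$) with Remark~\ref{r:1} (existence when $s>q_*$), and you have faithfully spelled out both implications. Your remark that the existence construction lands on the exterior domain $\Om_0=\{|x|>R_0\}$ rather than on a preassigned $\Om$ is consistent with how the paper itself reads the statement.
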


\begin{thm}\label{t:Nq}
Let $\Om$ be an exterior domain and let $N\leq q$. Then the differential
inequality \eqref{main1} has only the trivial solution $u=0$ provided $s\in(1,\infty)$. 
\end{thm}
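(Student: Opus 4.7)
Assume $u\not\equiv 0$; the plan is to construct radial barriers on the expanding annuli $D_R=\{R_1<|x|<R\}$ whose values at any fixed point diverge as $R\to\infty$, so that comparison forces $u$ itself to be infinite somewhere — a contradiction. The starting point is Lemma~\ref{L1.5}(ii), which for $q\geq N$ yields a positive lower bound $\liminf_{|x|\to\infty}u(x)>0$.

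By the strong maximum principle for nonnegative $(p,q)$-supersolutions of $\Depq\geq 0$, $u>0$ throughout $\Om$. Lemma~\ref{L1.5}(ii) then furnishes $R_1$ with $\{|x|\geq R_1\}\subset\Om$ and a constant $c_1>0$ such that $u\geq c_1$ on $\{|x|\geq R_1\}$. Since $s>1$, one has $u^{s-1}\geq c_1^{s-1}>0$ on $D_R$ for every $R>R_1$. Let $\psi_R\in W^{1,p}_0(D_R)$ be the unique (radial) weak solution of
\[
-\Delta_p\psi_R-\Delta_q\psi_R = c_1^{s-1}\text{ in }D_R,\qquad \psi_R=0\text{ on }\partial D_R,
\]
produced by minimizing the strictly convex functional $v\mapsto \int_{D_R}\bigl(|\nabla v|^p/p+|\nabla v|^q/q-c_1^{s-1}v\bigr)\,dx$. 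The weak comparison principle — which holds because the vector field $\xi\mapsto|\xi|^{p-2}\xi+|\xi|^{q-2}\xi$ is strictly monotone — combined with $\Depq\geq c_1^{s-1}=-\Delta_p\psi_R-\Delta_q\psi_R$ in $D_R$ and $\psi_R=0\leq u$ on $\partial D_R$, then yields $u\geq \psi_R$ in $D_R$.

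It therefore suffices to show that $\psi_R(r_0)\to\infty$ as $R\to\infty$ for any fixed $r_0>R_1$; this will force $u(x_0)=\infty$ at any $|x_0|=r_0$, contradicting $u\in C(\Om)$. Setting $F(t):=(|t|^{p-2}+|t|^{q-2})t$ and integrating the radial ODE for $\psi_R$ produces
\[
r^{N-1}F(\psi_R'(r))=\tfrac{c_1^{s-1}}{N}\bigl(r_*^N-r^N\bigr),\qquad r\in(R_1,R),
\]
where $r_*=r_*(R)\in(R_1,R)$ is the unique maximizer of $\psi_R$. If $r_*(R)$ remains bounded along a sequence $R_n\to\infty$, then for $r\geq 2r_*$ the right side grows like $r$; combined with the asymptotics $F(t)\sim|t|^{p-2}t$ for $|t|$ large, this gives $|\psi_{R_n}'(r)|\gtrsim r^{1/(p-1)}$ and hence $\psi_{R_n}(r_0)=\int_{r_0}^{R_n}|\psi_{R_n}'|\,dr\gtrsim R_n^{p/(p-1)}\to\infty$. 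Otherwise $r_*(R)\to\infty$, and on $(R_1,r_0)$ one has $r_*^N-r^N\geq r_*^N/2$, so $|F(\psi_R'(r))|\gtrsim r_*^N/r^{N-1}$ is large; the same inversion yields $\psi_R'(r)\gtrsim r_*^{N/(p-1)}r^{-(N-1)/(p-1)}$, and integration over $(R_1,r_0)$ gives $\psi_R(r_0)\gtrsim r_*(R)^{N/(p-1)}\to\infty$. The main obstacle is precisely this ODE step: one must invert $F(t)=|t|^{p-2}t+|t|^{q-2}t$ — a sum of two distinct power laws — consistently across both asymptotic regimes, relying on the $p$-homogeneous piece to dominate at large argument and deliver the growth rates $R^{p/(p-1)}$ or $r_*^{N/(p-1)}$ required for the pointwise blow-up of $\psi_R$.
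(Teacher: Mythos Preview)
Your argument is essentially correct, but it is a genuinely different route from the paper's. The paper's proof is two lines: Lemma~\ref{liminf} (applied with $g(t)=t^{s-1}$) forces $\liminf_{|x|\to\infty}u(x)=0$, while Lemma~\ref{L1.5}(ii) gives $\liminf_{|x|\to\infty}u(x)>0$; these two statements are incompatible. The work hidden in Lemma~\ref{liminf} is an explicit polynomial subsolution $w(x)=c|x|^{q/(q-1)}$ on large balls, tuned so that $-\Delta_p w-\Delta_q w<\gamma$ with $\gamma=\inf_{t\geq\varepsilon/2}g(t)$, which via comparison shows $u$ cannot stay uniformly above $\varepsilon>0$ at infinity. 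You instead start from the same lower bound $u\geq c_1$ supplied by Lemma~\ref{L1.5}(ii), but replace the explicit subsolution by the \emph{implicit} barrier $\psi_R$ solving $-\Delta_p\psi_R-\Delta_q\psi_R=c_1^{s-1}$ on the annulus $D_R$ with zero boundary data, and analyze the radial ODE to show $\psi_R$ blows up pointwise as $R\to\infty$. Both arguments exploit the same tension; the paper's is shorter because the explicit function $c|x|^{q/(q-1)}$ bypasses the inversion of $F(t)=|t|^{p-2}t+|t|^{q-2}t$, whereas your approach avoids having to guess a subsolution at the cost of the ODE analysis.

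One small imprecision: in your Case~1 (where $r_*(R_n)$ stays bounded, say by $M$) the identity $\psi_{R_n}(r_0)=\int_{r_0}^{R_n}|\psi_{R_n}'|\,dr$ requires $r_0\geq r_*(R_n)$, which need not hold for \emph{every} $r_0>R_1$. The fix is immediate: either choose $r_0>M$ after the case split, or note directly that $\psi_{R_n}(r_*(R_n))=\int_{r_*(R_n)}^{R_n}|\psi_{R_n}'|\,dr\to\infty$ with $r_*(R_n)\in[R_1,M]$, so $u$ is unbounded on the compact annulus $\{R_1\leq|x|\leq M\}$, still a contradiction with $u\in C(\Om)$.
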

For $p=q$, this result is can be found in Serrin and Zou 
\cite[Theorem~I$'$]{SZ}. 

Our next result deals with differential inequality \eqref{main1} in the range of $s$ which are not covered in Theorem~\ref{t:s1}.  In particular, we present a new and alternative proof based on Lemma~\ref{L5.1}, which eliminates the lower bound on $s$
 required by Theorem~\ref{t:s1}. However, the critical case $s=q^*$
 remains out of reach.
%$1<s<q<p$ or  $1<q<s<p$ or $1 < s < q_* \leq p$.

\begin{thm}\label{t:s2}  Let $q<N$, 
$\Omega$ be an exterior domain  and $1<s<q_*$. Then the only nonnegative solution to \eqref{main1} is $u\equiv 0$.
\end{thm}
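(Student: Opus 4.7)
The plan is to argue by contradiction, combining the sharp iterated lower bound of Lemma~\ref{L5.1} with an integral upper estimate produced by the nonlinear capacity method, so that the two polynomial rates in $R$ become incompatible as $R\to\infty$.

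Suppose $u\not\equiv 0$ is a nonnegative solution of \eqref{main1}. Since $u^{s-1}\ge 0$, the function $u$ is $(p,q)$-superharmonic, and the strong minimum principle gives $u>0$ on the unbounded component of $\Omega$ on which $u$ does not vanish identically. Apply Lemma~\ref{L5.1} with $m=0$ and with the exponent of $u$ in \eqref{grad-non-Om} identified with $s-1$: the condition \eqref{L5.1A} there reads $(s-1)(N-q)<N(q-1)$, which is equivalent to the standing hypothesis $s<q_*$. Lemma~\ref{L5.1} therefore yields, for every $\theta>0$, a constant $C_\theta>0$ with
\[
u(x)\ \ge\ C_\theta\,|x|^{-\theta}\qquad\text{on}\ \bar\Omega\cap\{|x|\ge 1\}.
\]
Integrating on a dyadic annulus gives, for any $\beta\in(0,s-1)$,
\[
\int_{\{R\le|x|\le 2R\}}u^{s-1-\beta}\dx\ \ge\ c_\theta\,R^{\,N-\theta(s-1-\beta)}.
\]

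For the matching upper bound, I would run the nonlinear capacity method with the test function $\phi=(u+\delta)^{-\beta}\eta_R^\lambda$, where $\eta_R\in C^\infty_c(\Omega)$ is a standard cutoff with $\eta_R\equiv 1$ on $\{2R_0\le|x|\le R\}$, $\supp\eta_R\subset\{R_0\le|x|\le 2R\}$ and $|\nabla\eta_R|\le C/R$ on the outer annulus $A_R$, and where $\beta\in(0,s-1)$ is small, $\lambda$ large, and $\delta>0$. Differentiating $\phi$ produces a nonpositive contribution $-\beta(u+\delta)^{-\beta-1}\eta_R^\lambda\bigl(|\nabla u|^p+|\nabla u|^q\bigr)$ that is moved to the left, while Young's inequality applied to the remaining mixed integral absorbs the $|\nabla u|^{p-1}$ and $|\nabla u|^{q-1}$ factors into the freshly moved gradient integrals. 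After $\delta\downarrow 0$ this yields the Caccioppoli-type bound
\[
I_R:=\int_\Omega u^{s-1-\beta}\eta_R^\lambda\dx\ \le\ C\!\int_{A_R}\!u^{p-1-\beta}\eta_R^{\lambda-p}|\nabla\eta_R|^p\dx+C\!\int_{A_R}\!u^{q-1-\beta}\eta_R^{\lambda-q}|\nabla\eta_R|^q\dx.
\]
An application of Hölder's inequality on $A_R$, combined with the pointwise lower bound of Lemma~\ref{L5.1} (which, via $u^{-\gamma}\le CR^{\theta\gamma}$, controls the negative-exponent factors arising in the decomposition of $u^{p-1-\beta}$ and $u^{q-1-\beta}$ against the target power $u^{s-1-\beta}$), majorizes the right-hand side by $CR^{-\alpha}I_R^{\gamma}$ for some $\alpha=\alpha(p,q,s,N,\beta,\theta)>0$ and $\gamma\in(0,1)$, provided $s<q_*$ and $\theta$ is chosen sufficiently small. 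Solving for $I_R$ gives $I_R\le CR^{-\alpha/(1-\gamma)}\to 0$, contradicting the lower bound $I_R\ge c_\theta R^{\,N-\theta(s-1-\beta)}\to\infty$ established above (with $\theta$ taken still smaller).

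I expect the main technical hurdle to lie in the uniform extraction of a strictly positive deficit $\alpha$ across the full subcritical range $1<s<q_*$. In the subrange $p<s<q_*$ (already covered by Theorem~\ref{t:s1}), the classical Mitidieri--Pohozaev Hölder reduction of $u^{p-1-\beta}$ to $u^{s-1-\beta}$ is directly available and produces $\alpha>0$ without appeal to any lower bound on $u$. The novelty of Theorem~\ref{t:s2} lies in the complementary subrange $1<s\le p$, where this Hölder reduction fails and must be replaced by a careful trading of the offending factor against the $R$-polynomial loss $R^{\theta\gamma}$ furnished by Lemma~\ref{L5.1}; crucially the free parameter $\theta$ there can be taken arbitrarily small, which is exactly what the subcriticality $s<q_*$ provides and what makes the final deficit $\alpha$ strictly positive. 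The critical case $s=q_*$ falls outside this scheme because Lemma~\ref{L5.1} no longer supplies an arbitrarily small $\theta$.
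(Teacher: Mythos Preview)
Your capacity scheme has a genuine gap precisely in the range $1<s\le p$ that you flag as the novel part. After your Caccioppoli step the right-hand side contains
\[
R^{-p}\int_{A_R} u^{p-1-\beta}\,\eta_R^{\lambda-p}|\nabla\eta_R|^p\dx,
\]
and to close the argument you must dominate this by $C R^{-\alpha} I_R^\gamma$ with $\gamma\le 1$. When $s\le p$ the exponent $p-1-\beta$ exceeds (or equals) $s-1-\beta$, so the only way to relate $u^{p-1-\beta}$ to $u^{s-1-\beta}$ is through the \emph{nonnegative} factor $u^{p-s}$. A lower bound $u\ge C_\theta|x|^{-\theta}$ from Lemma~\ref{L5.1} yields upper bounds on \emph{negative} powers of $u$ only; it gives no control whatsoever on $u^{p-s}$ from above, and hence no upper bound on $\int_{A_R} u^{p-1-\beta}$. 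The ``negative-exponent factors'' you invoke simply do not arise in this decomposition when $s\le p$. (They do arise for the $q$-term once $s>q$, but the uncontrolled $p$-term remains.) This is exactly why the paper's Lemma~\ref{L1.8} carries the hypothesis $p<s$, and why the capacity route in Theorem~\ref{t:s1} stops at that threshold.

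The paper's proof of Theorem~\ref{t:s2} avoids integral estimates altogether. It combines Lemma~\ref{L5.1} with the pointwise result $\liminf_{|x|\to\infty}u(x)=0$ of Lemma~\ref{liminf}: the lower bound $u\ge\kappa|x|^{-\theta}$ (with $\theta(s-1)<1$) forces $-\Delta_p u-\Delta_q u\ge \kappa^{s-1}|x|^{-\theta(s-1)}$, and one then compares $u$ with the explicit subsolution $\tilde\varphi_\varepsilon(x)=\min_{|x|=r_0}u-\varepsilon|x|^{\bar\theta}$ via a viscosity-type touching argument (Lemma~\ref{L-vis}), exactly as in the proof of Theorem~\ref{T1.7}. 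Letting $\varepsilon\to0$ gives $u(x)\ge\min_{|x|=r_0}u>0$ for all large $|x|$, contradicting $\liminf_{|x|\to\infty}u=0$. No upper bound on $u$ is ever needed.
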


It is worth noting that Theorem~\ref{t:s2} improves Theorem~\ref{t:s1} by proving the nonexistence of nonnegative solutions to \eqref{main1} for the range $\{1 < s < q_* \leq p\}\cup \{1<q<s<p<q_*\}\cup\{1<s<q<p\leq q_*\}$ and provides an alternative proof to Theorem~\ref{t:s1} when $1<q<p<s<q_*$. As a matter of fact the proof of Theorem~\ref{t:s2} offers an alternative to the classical nonlinear capacity method by Mitidieri and Pohozaev \cite{MP}, provided that higher regularity for solutions is available.
 
 %, used also in Bidaut-V\'eron and Pohozaev \cite{BP}, or to that employed by Serrin and Zou \cite{SZ}, which treated the case $p=q$. 

In particular, for the case $p=q$ and $s>p$, \cite{BP,SZ} employs the capacity method of Mitidieri and Pohozaev,  similar to the proof of Theorem~\ref{t:s1}. For $1<s\leq p$, however, the argument of \cite{SZ} crucially depends on the existence of positive eigenfunctions and invariance of the equation under scaling, a consequence of the operator's homogeneity (see Lemma 2.8 therein).  We also mention the works \cite{AS11a, AS11}, where similar problems are studied for a family of operators satisfying a general set of hypotheses. The authors employ maximum principle and weak Harnack estimates to establish their results. However, the arguments in these articles require either homogeneity of the operator or the validity of a weak Harnack inequality at  {\it small scale}. Given the structure of our operator, the applicability of their methods in the present context remains unclear.

In our present case, which is non-homogeneous, the nonlinear capacity method remains applicable for $s>p$ but breaks down for $s\leq p$. Furthermore, the lack of homogeneity prevents us from using a positive eigenfunction (see the work of \cite{BF23} for eigenfunctions of this operator). To circumvent this difficulty, we instead rely on Lemma~\ref{L5.1}, thereby providing a new and alternative method in Theorem~\ref{t:s2} for establishing the Liouville property for \eqref{main1}.

In the next theorem, we study differential inequality with $(p,q)$-Laplace operator in presence of nonlinear gradient terms.
\begin{thm}\label{T1.6}
Suppose $N\geq 2$,  $1<q<N$, $s\geq 0$, $m> p-1$ and 
\begin{equation}\label{T1.8AA}
 s(N-q)+m(N-1)\leq N(q-1)
\end{equation}
 holds true. Let $u$ be any positive solution of the inequality 
\begin{equation}\label{grad non}
 \Depq \geq u^s |\nabla u|^m \quad \text{ in }\Rn.
 \end{equation}
Then $u$ is constant.
\end{thm}

%{\cb  The subcase $p=q$ of Theorem \ref{T1.6} in the  superlinear case $m+s>p-1$ has been obtained in Corollary 1 in \cite{F} via the Mitidieri-Pohozaev nonlinear capacity method. }
We note that our proof of Theorem~\ref{T1.6} employs a different and considerably simpler approach. It directly applies the nonlinear capacity method, coupled with a lower-bound estimate for the solution. However, this direct method necessitates the condition $m > p - 1$.

We now elaborate on the difficulties encountered when attempting to adapt the method from \cite[Theorem 1.2]{CHZ}. The well-known technique used there relies on the substitution $u = v^b$ for a chosen constant $b$. This transformation converts the inequality $-\Delta_p u \geq u^s |\nabla u|^m$ into a form amenable to the methods of \cite{MP}, specifically $-\Delta_p v \geq |\nabla v|^\alpha$ for a suitable exponent $\alpha$.

If we apply the same substitution $u = v^b$ to our inequality \eqref{grad non}, the result is an inequality with purely gradient terms on the right-hand side, namely
\begin{align*}
&-b\big(\Delta_q v+|b|^{p-q}v^{(b-1)(p-q)}\Delta_p v\big)\geq C|\nabla v|^\alpha, \quad\text{if }\, s+m-q+1\neq 0,\\
&-\Delta_q v-e^{(p-q)v}\Delta_p v\geq C|\nabla v|^{\alpha}, \quad\text{if }\, s+m-q+1= 0,
\end{align*}
where $q-1<\alpha<\frac{N(q-1)}{N-1}$, $b(b-1)>0$. Crucially, due to the presence of the $(p, q)$-Laplacian, the resulting expression is not simpler than the original. The coupled operators $\Delta_q v$ and $\Delta_p v$ complicate the analysis.
 Another possibility could be to apply divergence theorem, as in \cite{Gof,RS01}, but due to the non-homogeneous nature of our operator, the end first order differential inequality becomes untractable due to inhomogeneous nature of the operator. For a similar reason, we do not follow the approach of \cite{BGQ} which relies on the construction of suitable radial solutions, by solving appropriate first order equations.

Therefore, to circumvent this difficulty, the proof of Theorem~\ref{T1.6} instead adapts the foundational capacity approach from \cite{MP} and integrates it with the lower-bound estimate provided in Lemma~\ref{L1.5}.
%\end{rem}

 In particular, Theorem~\ref{T1.6} extends previous Liouville results in \cite{MP2001}, \cite{BVHV} and \cite{CHZ}, as described in details below  inequality \eqref{pLapl_ge_grad} Though our Theorem~\ref{T1.6} gives Liouville results only for $m>p-1$, we can still recover the Liouville results for $0\leq m\leq q-1$, as mentioned below.

\begin{thm}\label{T1.7}
Suppose that $s\geq 0$,   $0\leq m\leq q-1$, $q<N$ and \eqref{L5.1A} is satisfied. 
%$$s(N-q)+m(N-1)<N(q-1).$$
Then any positive solution to
\begin{equation}\label{T1.7eq}
\Depq \geq u^{s}|\na u|^m\quad \text{in}\quad \Rn,
\end{equation}
is a constant.
\end{thm}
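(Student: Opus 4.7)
The plan is to combine the nonlinear capacity/test-function method of Mitidieri--Pohozaev with the almost-optimal lower bound from Lemma~\ref{L5.1}, which provides, for every $\theta>0$, a constant $C_\theta>0$ such that $u(x)\geq C_\theta|x|^{-\theta}$ on $\{|x|\geq 1\}$. This flexibility is precisely what compensates for the failure, when $m\leq q-1<p-1$, of the direct Young absorption argument that drives Theorem~\ref{T1.6}.

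Fix large $R>1$ and choose a standard cutoff $\xi\in C_c^\infty(B_{2R})$ with $\xi\equiv 1$ on $B_R$ and $|\nabla\xi|\leq C/R$. I would test the weak form of~\eqref{T1.7eq} against $\varphi = \xi^\lambda u^{-\gamma}$ for $\lambda\gg 1$ and $\gamma>p-1$ to be chosen. Computing $\nabla\varphi = \lambda\xi^{\lambda-1}\nabla\xi\,u^{-\gamma} - \gamma\xi^\lambda u^{-\gamma-1}\nabla u$ and applying Young's inequality with conjugate exponents $(p,p/(p-1))$ and $(q,q/(q-1))$ to the cross terms on the right, the latter can be absorbed into the resulting Caccioppoli integrals, giving
\[
\int u^{s-\gamma}|\nabla u|^m\xi^\lambda + \int\!\bigl(|\nabla u|^p+|\nabla u|^q\bigr)u^{-\gamma-1}\xi^\lambda \leq C\!\int_{B_{2R}\setminus B_R}\!\bigl(u^{p-1-\gamma}R^{-p}+u^{q-1-\gamma}R^{-q}\bigr)\dx.
\]
Since $\gamma>p-1>q-1$, the exponents $p-1-\gamma$ and $q-1-\gamma$ are negative, so Lemma~\ref{L5.1} applied with a free parameter $\theta>0$ furnishes $u^{p-1-\gamma}\leq C|x|^{\theta(\gamma-p+1)}$ and similarly for the $q$-exponent; integrating over the annular region of volume $\sim R^N$ then yields a power-of-$R$ control on the right.

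The final step, and the main technical obstacle, is to convert this Caccioppoli-type bound into the vanishing $\int_{\Rn}u^s|\nabla u|^m\dx=0$. I would accomplish this by interpolating the source term $\int u^s|\nabla u|^m\xi^\lambda$ against the $L^q$-Caccioppoli quantity $\int|\nabla u|^q u^{-\gamma-1}\xi^\lambda$ via H\"older's inequality with the conjugate pair $(q/m,q/(q-m))$, meaningful precisely because $m<q$, and then applying Lemma~\ref{L5.1} a second time to the residual $u$-integral. The subcritical assumption $s(N-q)+m(N-1)<N(q-1)$ is exactly the algebraic condition required to match exponents, so that for a suitable joint choice of $\gamma$ large and $\theta>0$ small, the aggregate power of $R$ on the right becomes strictly negative. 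Letting $R\to\infty$ then forces $\int_{\Rn}u^s|\nabla u|^m\dx=0$, and since $u>0$ this yields $|\nabla u|\equiv 0$ when $m>0$ (so $u$ is constant), while for $m=0$ one is reduced to the conclusion $u\equiv 0$ already covered by Theorem~\ref{t:s2}.
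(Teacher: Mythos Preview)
Your Caccioppoli step is correct, but the scheme does not close. After absorption you obtain
\[
\int u^{s-\gamma}|\nabla u|^{m}\xi^{\lambda}\,dx
+\int\bigl(|\nabla u|^{p}+|\nabla u|^{q}\bigr)u^{-\gamma-1}\xi^{\lambda}\,dx
\;\leq\; CR^{N-p+\theta(\gamma-p+1)}+CR^{N-q+\theta(\gamma-q+1)},
\]
and since $q<N$, $\gamma>q-1$, $\theta>0$, the dominant exponent $N-q+\theta(\gamma-q+1)$ is \emph{strictly positive} for every admissible choice of $\theta$ and $\gamma$; the right-hand side never tends to zero. Your proposed cure---interpolate $\int u^{s}|\nabla u|^{m}\xi^{\lambda}$ against the $q$-Caccioppoli term and apply Lemma~\ref{L5.1} to the residual---also fails: the H\"older remainder is $\bigl(\int u^{\beta}\bigr)^{(q-m)/q}$ with $\beta=(sq+m(\gamma+1))/(q-m)>0$, and Lemma~\ref{L5.1} is a \emph{lower} bound on $u$, hence bounds $u^{\beta}$ from below, not above. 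If instead you interpolate $\int u^{s-\gamma}|\nabla u|^{m}\xi^{\lambda}$ so that the residual exponent becomes negative, the estimate is circular and recovers nothing beyond the Caccioppoli bound itself. The subcritical hypothesis~\eqref{L5.1A} has already been spent in proving Lemma~\ref{L5.1}; there is no second algebraic match of the kind you describe. This is exactly the obstruction the paper names when it says the method of Theorem~\ref{T1.6} ``relies critically on $m>p-1$.''

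The paper's proof is not a capacity argument at all but a barrier/comparison argument. From Lemma~\ref{L5.1} one has $\Depq\geq\kappa|x|^{-s\theta}|\nabla u|^{m}$ for $|x|\geq 1$ with $s\theta<1$. For $\bar\theta\in(0,1)$ one checks that $\tilde\varphi_{\varepsilon}(x):=\varrho_{0}-\varepsilon|x|^{\bar\theta}$, with $\varrho_{0}=\min_{\overline{B}_{r_{0}}}u$, is a \emph{strict} subsolution of the same inequality on $\{|x|\geq r_{0}\}$, uniformly in $\varepsilon\in(0,1)$. A touching argument via Lemma~\ref{L-vis} yields $u\geq\tilde\varphi_{\varepsilon}$ there; letting $\varepsilon\to0$ gives $u\geq\varrho_{0}$ in all of $\Rn$, so $u$ attains its infimum, and the strong maximum principle (Lemma~\ref{R1}) forces $u$ constant.
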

The proof of Theorem~\ref{T1.6} relies critically on the assumption $m > p-1$, which is essential for bounding certain integrals. This reliance prevents the same technique from being applied to the case $m \leq q-1$. Furthermore, the alternative strategy of employing an auxiliary function, discussed in the previous page turns out to be unhelpful for this problem.
To address this issue, we introduce a new technique. Its first step is to substantially improve the known lower bound for solutions $u$ to \eqref{T1.7eq}, a result we prove in Lemma~\ref{L5.1}. This improved lower bound is pivotal, as it allows us to successfully implement a maximum principle argument to establish Theorem~\ref{T1.7}. We are confident that this core idea will be valuable for deriving Liouville theorems in other contexts.
 In particular, Theorem~\ref{T1.7} extends previous Liouville results in \cite{F09}, as illustrated below  \eqref{pLapl_ge_grad}.
 However, the range $q-1<m\leq p-1$ remains open to explore for \eqref{T1.7eq}.

\iffalse
%which finally leads to the following result.
%\end{rem}

\begin{thm}\label{T1.7}
Let $N\geq 2$,  $1<q<N$, $m, s\geq 0$ and \eqref{T1.6A} be satisfied. 
Assume $u$ is any bounded positive solution of the inequality \eqref{grad non}.
Then $u$ is constant.
\end{thm}
\fi

\smallskip

While in this paper we are dealing only with differential inequalities, in our fourth-coming paper \cite{BBF}, we have proved the Liouville results associated with quasilinear elliptic equations with $(p,q)$-Laplace operators in presence of various type of nonlinear gradient terms. 

\smallskip

The paper is organized as follows. In Section 2, we prove some comparison principles. Section 3 deals with $(p,q)$-super harmonic functions, namely the proof of Theorem~\ref{t:qbig} and Lemma~\ref{L1.5}. In Section 4, we give the proof of Theorem~\ref{t:s1} and Theorem~\ref{t:Nq}. Finally, Section 5 deals with the proof of Theorem~\ref{T1.6}, Theorem~\ref{T1.7} and Theorem\ref{t:s2}.

\section{Preliminary}

\begin{lemma}\label{l:comp1}
Let $\Omega$ be a  domain of $\Rn$ and $u,\, v$ be two continuous functions in the Sobolev space $W^{1,p}_{loc}(\Om)$ satisfy the inequality in the distributional sense
$$\Depq \geq f, \quad -\De_p v-\De_q v\leq f \quad\text{ in }\, \Omega,$$
{where $f\in L^{\frac{pN}{pN+p-N}}_{loc}(\Om)$}.
Suppose that, $u\geq v$ in $\partial\Om$ in the sense that the set $\{u-v+\eps\leq 0\}$
has compact support in $\Om$ for every $\eps>0$. Then $u\geq v$ in $\Om$.
\end{lemma}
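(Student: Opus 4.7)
The plan is to establish the comparison via the classical weak-monotonicity argument, testing the \emph{difference} of the two distributional inequalities against a suitable truncation of the positive part of $v-u$. Specifically, for a fixed parameter $\varepsilon>0$, the test function
\[
\varphi_\varepsilon \;:=\; (v - u - \varepsilon)_+
\]
will play the central role. Its support is contained in $\{v - u \geq \varepsilon\}$, which by hypothesis is compactly contained in $\Om$; hence $\varphi_\varepsilon$ belongs to $W^{1,p}_0(\Om)$ with $\nabla\varphi_\varepsilon = (\nabla v - \nabla u)\,\mathbf{1}_{\{v > u + \varepsilon\}}$ almost everywhere.

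Before using $\varphi_\varepsilon$ directly, I would extend the class of admissible test functions in the two distributional inequalities from $C^1_c(\Om)$ to non-negative, compactly supported elements of $W^{1,p}(\Om)$ by a standard mollification/truncation procedure. The integrability condition $f \in L^{pN/(pN+p-N)}_{\loc}(\Om)$ is precisely $L^{(p^*)'}_{\loc}$, so via H\"older's inequality together with the Sobolev embedding $W^{1,p}_0 \hookrightarrow L^{p^*}$ (and its obvious substitute when $p\geq N$), the term $\int f\varphi\,dx$ is finite and passes to the limit under approximation.

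Testing the inequality for $u$ and the reversed inequality for $v$ against $\varphi_\varepsilon$ and subtracting, the $f$-terms cancel to yield
\begin{align*}
&\int_{\{v > u + \varepsilon\}}\bigl(|\nabla v|^{p-2}\nabla v - |\nabla u|^{p-2}\nabla u\bigr)\cdot(\nabla v - \nabla u)\,dx \\
&\qquad + \int_{\{v > u + \varepsilon\}}\bigl(|\nabla v|^{q-2}\nabla v - |\nabla u|^{q-2}\nabla u\bigr)\cdot(\nabla v - \nabla u)\,dx \;\leq\; 0.
\end{align*}
By the standard strict monotonicity of the map $\xi \mapsto |\xi|^{r-2}\xi$ for $r>1$, each integrand is pointwise non-negative, vanishing only where the two gradients coincide. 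Hence both integrals are zero, which forces $\nabla u = \nabla v$ almost everywhere on $\{v > u + \varepsilon\}$, i.e.\ $\nabla\varphi_\varepsilon \equiv 0$ a.e.\ in $\Om$. Since $\varphi_\varepsilon$ is a non-negative $W^{1,p}_0$-function with compact support in $\Om$ and vanishing distributional gradient, it must be identically zero; therefore $v \leq u + \varepsilon$ throughout $\Om$. Letting $\varepsilon\downarrow 0$ and using the continuity of $u$ and $v$ gives $v \leq u$ on $\Om$.

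The main obstacle I anticipate is the approximation step legitimising the choice of $\varphi_\varepsilon$ as a test function in the weak formulation, especially keeping the right-hand side $\int f\varphi_\varepsilon$ stable under mollification; this is technical but essentially routine given the sharp exponent $pN/(pN+p-N)$ on $f$ and the local $W^{1,p}$-regularity of $u$ and $v$. The rest of the argument is driven by the monotonicity of each of the two nonlinear operators, which combines additively and therefore cooperates well with the two-term structure of the $(p,q)$-Laplacian.
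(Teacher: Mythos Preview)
Your proposal is correct and follows essentially the same route as the paper: both take $w=(v-u-\varepsilon)_+$, use the compact-support hypothesis to justify it as a test function, subtract the two weak formulations so that the $f$-terms cancel, and then invoke the monotonicity of $\xi\mapsto|\xi|^{r-2}\xi$ to force $\nabla w=0$ on $\{w>0\}$. The only cosmetic difference is that the paper records the monotonicity via the explicit Lindqvist-type lower bounds, splitting into the three cases $q\ge 2$, $q<p\le 2$, $q<2\le p$, whereas you appeal to strict monotonicity in the abstract; either version suffices here since no quantitative estimate is needed.
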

\begin{proof}
 Let $\varepsilon\in (0, 1)$. Define $w=(v-\eps-u)_+$. From our assertion above we have $w\in W^{1, p}_0(\Om_1)$ where
$\Om_1\Subset \Om$. Using $w$ as a test function in the given inequations and denoting $v_\eps=v-\eps$, we obtain
\begin{align*}
0 &\leq \int_{\Om_1} (|\na u|^{p-2}\na u-|\na v_\eps|^{p-2}\na v_\eps)\cdot \nabla w \dx +
\int_{\Om_1} (|\na u|^{q-2}\na u-|\na v_\eps|^{q-2}\na v_\eps)\cdot \nabla w \dx
\\
&=  \int_{\Om_1\cap\{w>0\}} (|\na u|^{p-2}\na u-|\na v_\eps|^{p-2}\na v_\eps)\cdot (\nabla v_\eps-\nabla u) \dx
\\
&\qquad+\int_{\Om_1\cap\{w>0\}} (|\na u|^{q-2}\na u-|\na v_\eps|^{q-2}\na v_\eps)\cdot (\nabla v_\eps-\nabla u) \dx.
\end{align*}
Therefore, 
\begin{align*}
& \int_{\Om_1\cap\{w>0\}} (|\na u|^{p-2}\na u-|\na v_\eps|^{p-2}\na v_\eps)\cdot (\nabla u-\nabla v_\eps) \dx
\\
&\qquad+\int_{\Om_1\cap\{w>0\}} (|\na u|^{q-2}\na u-|\na v_\eps|^{q-2}\na v_\eps)\cdot (\nabla u-\nabla v_\eps) \dx\leq 0.
\end{align*}

Case I:  $q\geq 2$.

Then $p\geq 2$ and using the inequality (see \cite[(I), pg.73]{L})
$$\langle|b|^{p-2}b-|a|^{p-2}a, b-a\rangle\geq 2^{2-p}|b-a|^{p},$$
we obtain
$$2^{2-p}\int_{\Om_1\cap\{w>0\}} |\na u-\na v_\eps|^p \dx +2^{2-q} \int_{\Om_1\cap\{w>0\}}  |\na u-\na v_\eps|^q \dx\leq 0.$$

Case II: $q<p\leq 2$.

In this case using the inequality (see \cite[(VII), pg.76]{L})
$$\langle|b|^{p-2}b-|a|^{p-2}a, b-a\rangle\geq (p-1)|b-a|^{2}(1+|b|^2+|a|^2)^\frac{p-2}{2},$$
we obtain
\begin{align*}
&(p-1)\int_{\Om_1\cap\{w>0\}} |\na u-\na v_\eps|^2(1+|\na u|^2+|\na v_\eps|^2)^\frac{p-2}{2} \dx \\
&\qquad+(q-1)\int_{\Om_1\cap\{w>0\}} |\na u-\na v_\eps|^2(1+|\na u|^2+|\na v_\eps|^2)^\frac{q-2}{2} \dx\leq 0.
\end{align*}

Case III: $q<2\leq p$
\begin{align*}
&2^{2-p}\int_{\Om_1\cap\{w>0\}} |\na u-\na v_\eps|^p \dx \\
&\qquad+(q-1)\int_{\Om_1\cap\{w>0\}} |\na u-\na v_\eps|^2(1+|\na u|^2+|\na v_\eps|^2)^\frac{q-2}{2} \dx\leq 0.
\end{align*}

Thus from  all the above three cases, it follows that $\na w=0$ on $\{w>0\}$, implying that $w=0$ in $\Om_1$. Hence $v-\eps\leq u$ in $\Om$. Since $\eps$ is arbitrary, the conclusion follows.

%Case I: $p>q>2$.
%
% In this case, define $V_p(t):=|t|^{\frac{p-2}{2}}t,\, V_q(t):=|t|^{\frac{q-2}{2}}t$. Then an easy computation yields 
%$$ |V_p(a)-V_p(b)|^2\leq c_p (|a|^{p-2}a-|b|^{p-2}b)(a-b)\quad
%\text{and}\quad |V_q(a)-V_q(b)|^2\leq c_q (|a|^{q-2}a-|b|^{q-2}b)(a-b),$$
%we arrive at
%$$\int_{\Om_1\cap\{w>0\}} |V_p(\na u)-V_p(\na v_\eps)|^2 \dx + \int_{\Om_1\cap\{w>0\}} |V_q(\na u)-V_q(\na v_\eps)|^2 \dx\leq 0.$$
%Thus, we have $\na w=0$ on $\{w>0\}$, implying that $w=0$ in $\Om_1$. Hence $v-\eps\leq u$ in $\Om$. Since $\eps$ is arbitrary, the conclusion follows.

\end{proof}

We also need the following strong maximum principle.
\begin{lemma}\label{R1}
If $u$ is a nontrivial nonnegative solution of the differential inequality 
\begin{equation}\label{-pq_ge0_Omega}-\Delta_p u-\Delta_q u\geq 0\quad \text{in}\;\; \Omega,\end{equation} 
then $u>0$ in $\Om$.
\end{lemma}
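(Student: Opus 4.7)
The plan is to argue by contradiction using a radial subsolution barrier together with the comparison principle established in Lemma~\ref{l:comp1}. Suppose there exists $x_0 \in \Omega$ with $u(x_0)=0$. Since $u$ is continuous and nontrivial on the connected open set $\Omega$, both $\{u>0\}$ and $\{u=0\}$ are nonempty, so there exists a point $y_0\in\overline{\{u>0\}}\cap\{u=0\}\cap\Omega$. Because $y_0$ lies in the closure of the positivity set and $\Omega$ is open, I can pick $z\in\{u>0\}$ close to $y_0$, together with radii $0<r<|y_0-z|<R$, such that $\overline{B_R(z)}\subset\Omega$, $u\ge m$ on $\overline{B_r(z)}$ for some $m>0$, and $y_0$ lies in the interior of the annulus $A=B_R(z)\setminus\overline{B_r(z)}$.

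Next, I build a radial barrier $v(x)=c\bigl(|x-z|^{-\alpha}-R^{-\alpha}\bigr)$ for an exponent $\alpha>\max\{0,(N-q)/(q-1)\}$ and a constant $c>0$ chosen so that $v\equiv m$ on $\partial B_r(z)$. By construction $v=0$ on $\partial B_R(z)$ and $v>0$ in $A$. A direct computation for radial functions yields
\begin{align*}
-\Delta_p v-\Delta_q v &= (c\alpha)^{p-1}\bigl[N-1-(\alpha+1)(p-1)\bigr]\,|x-z|^{-1-(\alpha+1)(p-1)}\\
&\quad+(c\alpha)^{q-1}\bigl[N-1-(\alpha+1)(q-1)\bigr]\,|x-z|^{-1-(\alpha+1)(q-1)}.
\end{align*}
Since $1<q<p$ implies $(N-q)/(q-1)\ge(N-p)/(p-1)$, the chosen $\alpha$ forces both bracketed factors to be nonpositive, so $-\Delta_p v-\Delta_q v\le 0$ in $A$; that is, $v$ is a classical (hence distributional) subsolution of \eqref{-pq_ge0_Omega} in $A$.

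On $\partial A$ one has $u\ge m=v$ on $\partial B_r(z)$ and $u\ge 0=v$ on $\partial B_R(z)$, so by continuity $\{u-v+\varepsilon\le 0\}$ is compactly contained in $A$ for every $\varepsilon>0$. Applying Lemma~\ref{l:comp1} with $f\equiv 0$ on the annulus $A$ gives $u\ge v$ throughout $A$. Evaluating at the interior point $y_0$ produces
$$u(y_0)\ge c\bigl(|y_0-z|^{-\alpha}-R^{-\alpha}\bigr)>0,$$
which contradicts $u(y_0)=0$. This contradiction forces $\{u=0\}=\emptyset$, i.e.\ $u>0$ in $\Omega$.

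The principal technical subtlety is the selection of a single exponent $\alpha$ that simultaneously makes the radial test function a subsolution of both $-\Delta_p$ and $-\Delta_q$ on the annulus, since the two operators have different scaling. Fortunately, because $q<p$ the two critical exponents are comparable, so any $\alpha$ exceeding the larger of them works for both pieces; the obstacle therefore reduces to the algebraic check displayed above. The only other point requiring care is the preparatory geometric step of choosing $z$, $r$, $R$ so that $y_0$ is strictly interior to the annulus, which is straightforward once $y_0\in\overline{\{u>0\}}\cap\Omega$ has been located.
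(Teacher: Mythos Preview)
Your proof is correct and takes a genuinely different route from the paper. The paper first treats the case $u\in C^1(\Omega)$ via a Hopf-type argument: it builds an exponential barrier $\zeta(x)=e^{-\alpha|x|}-e^{-\alpha r}$ on an annulus touching the zero set from the positivity side, applies comparison, and then reads off a nonzero normal derivative at the contact point, contradicting $\nabla u=0$ at an interior minimum. For general $u\in C(\Omega)\cap W^{1,p}_{\loc}(\Omega)$ it then passes through an auxiliary Dirichlet solution $v$ (which is $C^1$ by regularity theory) and propagates positivity by a chain-of-balls argument. Your approach bypasses both the $C^1$ reduction and the Hopf step entirely: by choosing a power barrier $c(|x-z|^{-\alpha}-R^{-\alpha})$ that is \emph{strictly positive} at the target point $y_0$ inside the annulus, comparison alone already forces $u(y_0)>0$. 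This is more elementary (no gradient information, no auxiliary Dirichlet problem, no appeal to $C^1$ regularity theory) and in fact reuses exactly the same power barrier the paper later employs in Lemma~\ref{L1.5}. The paper's route, on the other hand, yields the quantitative Hopf inequality as a by-product, which could be of independent use; your argument trades that extra information for brevity.
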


\begin{proof}
This can be done following an argument similar to $\Delta_p$, see \cite{Vaz}. We split the proof in two parts.

First, assume that $u\in C^1(\Omega)$. Now for any $r\in (0, 1)$ we consider the function 
$\zeta(x)= e^{-\alpha |x|}-e^{-\alpha r}$ for $\frac{r}{2}\leq |x|\leq r$, and $\alpha>0$ will be chosen later. A straightforward calculation reveals
\begin{align*}
\Delta_p\zeta(x)&= \alpha^{p-1} e^{-\alpha(p-1)|x|}\left[\alpha(p-1)-\frac{n-1}{|x|}\right],
\\
\Delta_q\zeta(x)&= \alpha^{q-1} e^{-\alpha(q-1)|x|}\left[\alpha(q-1)-\frac{n-1}{|x|}\right].
\end{align*}
We can choose $\alpha$ large enough, depending on $r$, such that $-\Delta_p\zeta-\Delta_ q\zeta<0$ for $\frac{r}{2}\leq |x|\leq r$.

Now suppose, $\mathcal{Z}:=\{x\in\Omega\;:\; u(x)=0\}$ in nonempty.  In particular, $\mathcal{Z}$ is relatively closed in $\Omega$. Consider a ball $B_r(x_0)\Subset\Om$ that touches
$\mathcal{Z}$ from outside, this condition is known as "exterior sphere condition". Choose $\kappa>0$ small enough so that $\kappa\zeta_{x_0}(x):=\kappa\zeta(x-x_0)\leq u$ on $|x|=\frac{r}{2}$. Also, it is easily seen that
$-\Delta_p(\kappa\zeta_{x_0})-\Delta_p(\kappa\zeta_{x_0})<0$ for $\frac{r}{2}\leq |x-x_0|\leq r$. Applying the comparison principle
Lemma~\ref{l:comp1}, it readily
gives us, for $\kappa>0$ small enough,
$$u(x)\geq \kappa\zeta_{x_0}(x)\quad\text{for}\quad  \frac{r}{2}\leq |x-x_0|\leq r.$$ Let 
$z\in \mathcal{Z}\cap  \partial{B}_r(x_0)$. Then, we have
$$u(x)-u(z)\geq \kappa \left(e^{-\alpha|x-x_0|}-e^{-\alpha r}\right)\geq \kappa \alpha e^{-\alpha r} (r-|x-x_0|).$$
This is a standard quantitative Hopf's inequality  implying that $\nabla u(z)\neq 0$. Indeed, for $\nu=\frac{x_0-z}{|x_0-z|}$ and noting that $|z-x_0|=r$,
$$\frac{\partial u}{\partial \nu}(z)=\lim_{t\to0^+}\frac{u(z+t\nu)-u(z)}t\ge \kappa  \alpha e^{-\alpha r}\lim_{t\to0^+}  \frac{r-|z+t\nu-x_0|}t =\kappa  \alpha e^{-\alpha r}>0.$$ But, since $u\in C^1(\Om)$ and $u(z)=0$,  that is $z$ is a minimum point in $\Omega$ for $u$,  we must have
$\nabla u(z)=0$. This is a contradiction, proving that $\mathcal{Z}$ is empty.

Now we suppose that $u\in C(\Om)\cap W^{1, p}_{loc}(\Om)$. Since $u\not\equiv 0$, there exists a ball $B_\delta(\bar{x})\Subset\Om$ such that
$u\neq 0$ on $\partial B_\delta(\bar{x})$. Let $v\in u + W^{1, p}_0(B_\delta(\bar{x}))$ be a minimizer of
$$v\mapsto \frac{1}{p}\int_{B_\delta(\bar{x})} |\nabla u|^p \dx + \frac{1}{q}\int_{B_\delta(\bar{x})} |\nabla u|^q \dx,$$
over the set $u+ W^{1, p}_0(B_\delta(\bar{x}))$. In particular, $v$ is a weak solution to
\begin{equation}\label{comp_B_delta}-\Delta_p v-\Delta_p v =0 \quad \text{in} \; B_\delta(\bar{x}), \quad v=u\quad \text{on}\; \partial B_\delta(\bar{x}).\end{equation}
From \cite[Theorem~1.7]{Lib91} we know that $v\in C^1(B_\delta(\bar{x}))$, then by \eqref{-pq_ge0_Omega} combined with  \eqref{comp_B_delta} and by comparison principle we have $0\leq v\leq u$ in $B_\delta(\bar{x})$.
Since $u\not\equiv 0$ on $\partial B_\delta(\bar{x})$, we also have $v\not\equiv 0$. Thus, from the first part we obtain that $v>0$ in 
$B_\delta(\bar{x})$, implying $u>0$ in $B_\delta(\bar{x})$. Now the same argument applies to every ball that interests $B_\delta(\bar{x})$, and
then to every ball that intersects one of those balls and so on. This completes the proof.
\end{proof}

We also recall the following result from \cite[Lemma 3]{BBF} which will be used in proving Theorem~\ref{T1.7}.
\begin{lemma}\cite[Lemma 3]{BBF}\label{L-vis}
Suppose that $u\in C^1(\Om)$ is local weak supersolution to $\Depq \geq g$ in $\Om$, where $g$ is a continuous functions. 
If for some point $x\in\Omega$, 
there exists a function $\varphi\in C^2(B_r(x))$, $B_r(x)\Subset \Omega$, such that $\nabla\varphi(x)\neq 0$ and $u(y)-\varphi(y)\geq u(x)-\varphi(x)$ for $y\in B_r(x)$, then we have $-\De_p \varphi(x) -\De_q\varphi(x) \geq g(x)$.
\end{lemma}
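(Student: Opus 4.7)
The approach I would take is a contradiction argument, in the spirit of ``weak supersolutions are viscosity supersolutions at regular points of the operator.'' Suppose, for contradiction, that $-\Delta_p \varphi(x) - \Delta_q \varphi(x) < g(x)$. The plan is to construct a $C^2$ strict weak subsolution $\varphi_\epsilon$ of the inequality $-\Delta_p w - \Delta_q w \leq g$ on a small ball $B_\rho(x)$ with $\varphi_\epsilon \leq u$ on $\partial B_\rho$ but $\varphi_\epsilon(x) > u(x)$, and then invoke the comparison principle of Lemma~\ref{l:comp1} for a contradiction.

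The first move is a pair of harmless reductions. Subtracting the constant $\varphi(x) - u(x)$ from $\varphi$ alters neither its gradient nor $-\Delta_p \varphi - \Delta_q \varphi$, so I may assume $\varphi(x) = u(x)$, hence $u \geq \varphi$ on $B_r(x)$ with equality at $x$. Next, replacing $\varphi$ by $\varphi_\delta(y) := \varphi(y) - \delta|y - x|^2$ for a small $\delta > 0$ turns the minimum of $u - \varphi$ at $x$ into a strict one, while keeping $\nabla \varphi_\delta(x) = \nabla \varphi(x) \neq 0$ and perturbing $-\Delta_p - \Delta_q$ by an $O(\delta)$ amount (the operator is continuous in $(\nabla\varphi, D^2\varphi)$ at regular points). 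If I can prove the desired conclusion for $\varphi_\delta$, sending $\delta \to 0^+$ recovers it for $\varphi$. Hence WLOG $u - \varphi$ attains a strict local minimum at $x$ and $\varphi(x) = u(x)$.

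Since $\varphi \in C^2$ and $\nabla\varphi(x) \neq 0$, the function $y \mapsto -\Delta_p \varphi(y) - \Delta_q \varphi(y)$ is continuous in a neighborhood of $x$; combined with the continuity of $g$ and the strict inequality at $x$, I pick $\rho \in (0, r)$ so small that $-\Delta_p \varphi - \Delta_q \varphi < g$ pointwise on $B_\rho(x)$, and also so that $m := \min_{\partial B_\rho(x)}(u - \varphi) > 0$ thanks to the strict minimum. For any $\epsilon \in (0, m)$, the function $\varphi_\epsilon := \varphi + \epsilon$ is still a smooth pointwise strict subsolution on $B_\rho$ (adding a constant does not affect the operator), it satisfies $\varphi_\epsilon(x) = u(x) + \epsilon > u(x)$, and on $\partial B_\rho$ one has $u - \varphi_\epsilon \geq m - \epsilon > 0$. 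By continuity this positivity extends to an inner neighborhood of $\partial B_\rho$, so for every $\eta > 0$ the set $\{u - \varphi_\epsilon + \eta \leq 0\}$ is compactly contained in $B_\rho$, which is precisely the generalized boundary hypothesis required by Lemma~\ref{l:comp1}. The integrability condition on the right-hand side there is trivial since $g$ is continuous and $B_\rho$ is bounded. Lemma~\ref{l:comp1} then yields $u \geq \varphi_\epsilon$ on $B_\rho$, contradicting $\varphi_\epsilon(x) > u(x)$.

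The step I expect to require the most care is the verification of the ``compactly contained'' form of the boundary hypothesis in Lemma~\ref{l:comp1}; it is precisely this requirement that motivates the initial reduction to a strict minimum via the quadratic perturbation. The non-degeneracy assumption $\nabla\varphi(x) \neq 0$ enters in two essential ways: it makes $-\Delta_p \varphi - \Delta_q \varphi$ a classical continuous quantity on a neighborhood of $x$, and it legitimizes passing to the limit $\delta \to 0^+$ in the perturbation step.
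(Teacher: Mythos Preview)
The paper does not supply a proof of this lemma; it is quoted verbatim from \cite[Lemma~3]{BBF}, so there is nothing here to compare your argument against. Your approach is the standard one for showing that a $C^1$ weak supersolution is a viscosity supersolution at non-degenerate test points, and it is correct: the reduction to a strict minimum via the quadratic perturbation $\varphi_\delta(y)=\varphi(y)-\delta|y-x|^2$, followed by the lift $\varphi_\epsilon=\varphi_\delta+\epsilon$ and an appeal to the comparison principle of Lemma~\ref{l:comp1} on a small ball, goes through cleanly. All the hypotheses of Lemma~\ref{l:comp1} (continuity, membership in $W^{1,p}_{\loc}$, local integrability of the continuous right-hand side $g$, and the compact-support form of the boundary condition) are indeed met. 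One small point of presentation: the order of choosing $\rho$ and $\delta$ deserves a word of care---first fix $\rho_0$ so that $|\nabla\varphi|\geq c>0$ on $\overline{B_{\rho_0}(x)}$, then take $\delta$ small relative to $c$ and $\rho_0$ so that $|\nabla\varphi_\delta|\geq c/2$ there, and only then shrink to $\rho\leq\rho_0$---so that the classical computation of $-\Delta_p\varphi_\delta-\Delta_q\varphi_\delta$ is legitimate on all of $B_\rho$. With that clarified, the argument is complete.
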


\section{$(p,q)$ superharmonic functions}

First, we prove non-existence of nontrivial super-solution for $N\leq q$, namely Theorem~\ref{t:qbig}.

\medskip

{\bf Proof of Theorem~\ref{t:qbig}}.
%\begin{proof}
Let $u$ be a nonnegative nontrivial solution to the above inequation. Therefore, by Remark~\ref{R1}, $u>0$ in $\Rn$. Now, we define $\psi(x)=\psi(|x|):=-\log |x|$ for $x\neq 0$. It is then easy to see that
$$ \De_p \psi(x)=-\frac{N-p}{|x|^p} \quad\text{and}\quad \De_q\psi(x)=-\frac{N-q}{|x|^q}\quad \text{for}\; x\neq 0.$$
Now fix a point $x_0\in \Rn$ and let $\kappa_m=\min_{|x-x_0|\leq \frac{1}{m}} u$. For $\varepsilon\in (0, 1)$, we define
$$v=\kappa_m + \varepsilon\bigg(\psi(x-x_0)- \psi\big(\frac{1}{m}\big)\bigg).$$ It is evident that $-\De_p v-\De_q v\leq 0$ in $\big(B(x_0, \frac{1}{m})\big)^c$ and $u\geq v$ on $|x-x_0|=\frac{1}{m}$. Since $v(x)\to -\infty$ as
$|x|\to\infty$, clearly $\exists R>>1$ such that $u\geq v$ in $B(x_0, R)^c$. Now applying 
 the comparison principle in $\big(B(x_0, \frac{1}{m})\big)^c\cap B(x_0, R)$ we obtain 
 $u\geq v$ in $\big(B(x_0, \frac{1}{m})\big)^c$, for all $\varepsilon\in (0, 1)$ and $\forall\,m>0$. Now, we first let $\varepsilon\to 0$ and then $m\to\infty$,
to get $u(x)\geq u(x_0)$ for all $x\in\Rn$. Since $x_0$ is an arbitrary point, $u$ must be a constant. \hfill $\Box$
%\end{proof}

\begin{rem}
From the proof of Theorem~\ref{t:qbig} we can relax the non-negativity assumption on $u$ by a logarithmic decay to $-\infty$. More precisely, the conclusion of Theorem~\ref{t:qbig} remains valid for supersolution $u$ satisfying
$$\liminf_{|x|\to\infty} \frac{u(x)}{\log|x|}\geq -\varepsilon,$$
for any $\varepsilon>0$
Compare this with \cite[Remark~(i), p. 130]{PW-book}
\end{rem}

\medskip

Next, we move to the exterior domain and prove Lemma~\ref{L1.5}.

{\bf Proof of Lemma~\ref{L1.5}}:
%\begin{proof}

{\bf Case 1}: $1<q<N$.

Define 
$$\varphi_\theta(x)=|x|^{-\theta}\quad \text{for}\; x\neq 0.$$
Then, a straightforward computation yields that
\begin{align*}
-\De_p\varphi_\theta(x) & = \theta^{p-1} |x|^{-(\theta+1)(p-1)-1}\big(N-(\theta+1)(p-1)-1\big),
\\
-\De_q\varphi_\theta(x) & = \theta^{q-1} |x|^{-(\theta+1)(q-1)-1}\big(N-(\theta+1)(q-1)-1\big),
\end{align*}
for $x\neq 0$. 

Since $\theta\geq \frac{n-q}{q-1}>\frac{n-p}{p-1}$, 
it follows that 
$$ -\De_p\varphi_\theta\leq 0\quad \text{and}\quad -\De_q\varphi_\theta\leq 0.$$
Define, $$k=R^{\theta}\min_{|x|=R}u>0$$
and $$v=\frac{k}{|x|^\theta}.$$
Thus, $-\De_p v-\De_q v\leq 0$ in $\Om$. Now let $v_\varepsilon=v-\varepsilon$. Since $\theta>0$, there exists
$r_\varepsilon$ large enough so that $v_\varepsilon(x)\leq 0$ for $|x|\geq r_\varepsilon$. From our choice above, we have
$u\geq v_\varepsilon$ in $\partial (B_{r_\varepsilon}\setminus B_{R})$. Therefore, from the comparison principle, we obtain
$u\geq v_\varepsilon$ in $B_{r_\varepsilon}\setminus B_{R}$. Since $v_\varepsilon\leq 0$ in $B^c_{r_\varepsilon}$, it follows that
$u\geq v_\varepsilon$ in $B^c_{R}$. Now letting $\varepsilon\to 0$, it follows $u\geq \frac{\kappa}{|x|^\theta}$ in $|x|>R$.

{\bf Case 2:} $q\geq N$.

In this case, for any $\theta>0$, we have $\theta>0\geq \frac{N-q}{q-1}>\frac{N-p}{p-1}$. Hence, 
$$-\Delta_p\varphi_\theta-\Delta_q\varphi_\theta\leq 0.$$
As in the case 1, we define $v_\eps$ and using the same argument we derive $u\geq v_\varepsilon=\frac{k}{|x|^\theta}-\eps$ in $B^c_{R}$. Since, this is true for any $\theta>0,\, \eps>0$, we first let $\eps\to 0$ and then let $\theta\to 0$ yields us
$$u\geq k \quad\text{in }\, |x|>R.$$
Hence, $\lim\inf_{|x|\to\infty}u(x)>0.$
\hfill $\Box$

%\end{proof}

\section{Power nonlinearity in exterior domains}

In this section, we prove Theorem~\ref{t:s1}. For this, we first prove the following key lemma,   taking inspiration from the $p$-Laplacian case by Serrin-Zou \cite{SZ}, see also \cite{ruiz}.

\begin{lemma}\label{L1.8}
Let $\Omega$ be an exterior domain, $1<q<p<s$ and $q < N$. Assume $u$ to be a positive  solution to \eqref{main1} and $R>0$. 

Then, for all  $\gamma\in (0, s-1)$, there exists a constant $C=C(N,p,q,s,\gamma)>0$ such that
\begin{equation}\label{EL1.8A}
\int_{B(x_0, R)} u^{\gamma} \dx\leq C R^{N-\frac{q\gamma}{s-q}} \end{equation}
for all  ${B(x_0, R)}$ such that it holds $B(x_0, 2R)\subset\Omega$. 
\end{lemma}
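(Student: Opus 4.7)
The approach is the Mitidieri--Pohozaev nonlinear capacity (test function) method, adapted to the $(p,q)$-setting in the spirit of Serrin--Zou. Since $u$ is a nontrivial nonnegative supersolution to $\Depq \geq u^{s-1}\geq 0$, Lemma~\ref{R1} yields $u>0$ in $\Om$, so the test function $\varphi := u^{-\alpha}\eta^{\beta}$ is admissible; here $\alpha := s-1-\gamma \in (0, s-1)$, $\beta$ is a large parameter to be fixed, and $\eta \in C_c^{1}(B(x_0, 2R))$ is a standard cutoff with $\eta\equiv 1$ on $B(x_0,R)$, $0\le\eta\le 1$, and $|\na \eta|\leq C/R$. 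Inserting $\varphi$ in the weak formulation, computing $\na \varphi = -\alpha u^{-\alpha-1}\eta^{\beta}\na u + \beta u^{-\alpha}\eta^{\beta-1}\na \eta$, and moving the negative-definite gradient-energy terms to the left gives
\begin{equation*}
\int u^{\gamma}\eta^{\beta}\dx + \alpha\int \bigl(|\na u|^{p}+|\na u|^{q}\bigr)u^{-\alpha-1}\eta^{\beta}\dx \leq \beta \int \bigl(|\na u|^{p-1}+|\na u|^{q-1}\bigr) u^{-\alpha}\eta^{\beta-1}|\na \eta|\dx.
\end{equation*}

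Next, for each $r\in\{p,q\}$, the corresponding term on the right is split by Young's inequality with conjugate exponents $r/(r-1)$ and $r$, writing $|\na u|^{r-1}u^{-\alpha}\eta^{\beta-1}|\na \eta|=\bigl(|\na u|^{r}u^{-\alpha-1}\eta^{\beta}\bigr)^{(r-1)/r}\bigl(u^{r-1-\alpha}\eta^{\beta-r}|\na \eta|^{r}\bigr)^{1/r}$ and choosing the Young parameter $\eps<\alpha/(2\beta)$ so that the gradient-energy factors are absorbed into the left. This leaves
\begin{equation*}
\int u^{\gamma}\eta^{\beta}\dx \leq C\!\int u^{p-1-\alpha}\eta^{\beta-p}|\na \eta|^{p}\dx + C\!\int u^{q-1-\alpha}\eta^{\beta-q}|\na \eta|^{q}\dx.
\end{equation*}
In the sub-range $\alpha\in(0,q-1)$ both exponents $r-1-\alpha$ are strictly positive, so I apply Hölder's inequality with conjugate exponents $\gamma/(r-1-\alpha)$ and $\gamma/(s-r)$ (both in $(1,\infty)$ because $s>p>q>1+\alpha$). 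Taking $\beta$ large enough that $\beta > r\gamma/(s-r)$ for $r=p,q$ keeps the $\eta$-power non-negative, while $|\na \eta|\leq C/R$ and $|B_{2R}|\leq CR^{N}$ control the remaining factor. Setting $I:=\int u^{\gamma}\eta^{\beta}\dx$, the bound becomes
\begin{equation*}
I \leq C\,I^{(p-1-\alpha)/\gamma}R^{N(s-p)/\gamma-p} + C\,I^{(q-1-\alpha)/\gamma}R^{N(s-q)/\gamma-q}.
\end{equation*}
Since both powers of $I$ are strictly less than $1$, a final Young's inequality absorbs them into $I$ on the left and yields $I\leq C\bigl(R^{N-p\gamma/(s-p)}+R^{N-q\gamma/(s-q)}\bigr)$. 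As $q<p<s$ forces $q/(s-q)<p/(s-p)$, for $R\geq 1$ the second summand dominates, proving the stated estimate in the range $\gamma\in(s-q, s-1)$.

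To reach the remaining range $\gamma\in(0,s-q]$, I would bootstrap by Hölder: fixing any $\gamma_{0}\in(s-q,s-1)$,
\begin{equation*}
\int_{B_{R}} u^{\gamma}\dx \leq |B_{R}|^{1-\gamma/\gamma_{0}}\Bigl(\int_{B_{R}} u^{\gamma_{0}}\dx\Bigr)^{\gamma/\gamma_{0}} \leq C\,R^{N(1-\gamma/\gamma_{0}) + (N-q\gamma_{0}/(s-q))\gamma/\gamma_{0}} = C\,R^{N-q\gamma/(s-q)},
\end{equation*}
invoking the previously established estimate for $\gamma_{0}$. \textbf{Main obstacle.} The essential technical difficulty is the twin presence of the $p$- and $q$-gradient terms: every Young and Hölder estimate splits into two pieces, and the absorption of the gradient energies must succeed for both exponents under a single common choice of $(\eps,\beta)$. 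The deeper structural point is that the Hölder step requires $\gamma/(s-r)>1$ for the more restrictive choice $r=q$, equivalently $\alpha<q-1$; this is exactly why the test-function argument alone delivers the estimate only for $\gamma>s-q$, making the Hölder bootstrap indispensable to cover the remaining small values of $\gamma$.
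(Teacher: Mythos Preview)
Your proposal is correct and follows essentially the same route as the paper: the same test function $\varphi=u^{-\alpha}\eta^{\beta}$, the same absorption of the gradient energies via Young's inequality to reach $\int u^{\gamma}\eta^{\beta}\leq C R^{-p}\!\int u^{\gamma-(s-p)}\eta^{\beta-p}+C R^{-q}\!\int u^{\gamma-(s-q)}\eta^{\beta-q}$, and the same H\"older bootstrap to cover small $\gamma$. The only cosmetic difference is that the paper isolates the borderline case $\gamma=s-q$ (where the $q$-term has exponent zero) and bootstraps from there, while you bootstrap from an arbitrary $\gamma_{0}\in(s-q,s-1)$; your restriction to $R\geq 1$ when comparing $R^{N-p\gamma/(s-p)}$ with $R^{N-q\gamma/(s-q)}$ is the same implicit assumption the paper makes.
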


\begin{proof}
Let $\zeta\in C^2_c(B(0,2))$ be a radial cut-off function such that $\zeta\equiv 1$ in $B(0,1)$, $|\nabla \zeta|\leq 2$, $|D^2\zeta|\leq C$.  For $k>p$ to be determined later and 
$d=s-1-\gamma$, we define
$$\varphi:=\frac{\zeta^k\big(\frac{|x-x_0|}{R}\big)}{u^d} >0$$ as a test function for \eqref{main1}. This yields us

% For $R>2$, Define,  $\varphi:=\zeta^k\big(\frac{|x-x_0|}{R}\big)$
%
%Choose a function $\zeta(x)=\zeta_R(x):=\chi(\frac{x-x_0}{R})$ where $\chi:\Rn\to [0, 1]$ is a smooth function satisfying $\chi(x)=1$ for $|x|\leq \frac{1}{2}$ and $\chi(x)=0$ for $|x|\geq 1$.
%Now using $u^{-d}\zeta^k$ as a test function with $d=s-1-\gamma$ for $\gamma\in (0, s-p)$ we obtain
\begin{align*}
\int_{B_{2R}(x_0)}u^{s-1-d}  \zeta^k \dx \leq\int_{B_{2R}(x_0)} |\nabla u|^{p-2}\nabla u\cdot \nabla (u^{-d}\zeta^k) \dx + \int_{B_{2R}(x_0)} |\nabla u|^{q-2}\nabla u\cdot \nabla (u^{-d}\zeta^k)\dx,
\end{align*}
which can also be written as
\begin{align}\label{ET1.8A}
&\int_{B_{2R}(x_0)}u^{\gamma}  \zeta^k \dx + d \int_{B_{2R}(x_0)}  u^{\gamma-s}|\nabla u|^{p}\zeta^k \dx + d \int_{B_{2R}(x_0)}  u^{\gamma-s}|\nabla u|^{q}\zeta^k \dx\nonumber
\\
&\qquad \leq\int_{B_{2R}(x_0)} u^{-d}|\nabla u|^{p-2}\nabla u\cdot \nabla \zeta^k \dx + \int_{B_{2R}(x_0)} u^{-d}|\nabla u|^{q-2}\nabla u\cdot \nabla \zeta^k\dx.
\end{align}
As $|\na\zeta^k|=k\zeta^{k-1}|\na\zeta|\leq 2k\zeta^k \frac{1}{R\zeta}$, using 
 Young's inequality with the pair $(\frac{p}{p-1}, p)$
and writing $u^{-d}=u^{(\gamma-s)\frac{p-1}{p}} u^{-d-(\gamma-s)\frac{p-1}{p}}=u^{(\gamma-s)\frac{p-1}{p}} u^{\frac{p+\gamma-s}{p}}$, as well as $\zeta^{k-1}=\zeta^{\frac k{p'}+\frac kp-1}$, we get
\begin{align*}
\abs{\int_{B_{2R}(x_0)} u^{-d}|\nabla u|^{p-2}\nabla u\cdot \nabla \zeta^k \dx}\leq \frac{d}{2}\int_{B_{2R}(x_0)} |\nabla u|^{p} u^{\gamma-s} \zeta^k \dx 
+ C R^{-p}\int_{B_R(x_0)} \zeta^{k-p} u^{p+\gamma-s}\dx.
\end{align*}
Similarly, the last term in \eqref{ET1.8A} is estimated as
\begin{align*}
\abs{\int_{B_{2R}(x_0)} u^{-d}|\nabla u|^{p-2}\nabla u\cdot \nabla \zeta^k \dx}\leq \frac{d}{2}\int_{B_{2R}(x_0)} |\nabla u|^{q} u^{\gamma-s} \zeta^k \dx 
+ C R^{-q}\int_{B_R(x_0)} \zeta^{k-q} u^{q+\gamma-s}\dx
\end{align*}
with a suitable constant $C$. Inserting these estimate in \eqref{ET1.8A} we arrive at 
\begin{align}\label{ET1.8B}
&\int_{B_{2R}(x_0)}u^{\gamma}  \zeta^k dx + \frac{d}{2} \int_{B_{2R}(x_0)} |\nabla u|^{p} u^{\gamma-s}\zeta^k \dx + \frac{d}{2} \int_{B_{2R}(x_0)} |\nabla u|^{q} u^{\gamma-s}\zeta^k \dx\nonumber
\\
&\qquad \leq C R^{-p}\int_{B_{2R}(x_0)} \zeta^{k-p} u^{ \gamma-\gamma_p}\dx + C R^{-q}\int_{B_{2R}(x_0)} \zeta^{k-q} u^{\gamma-\gamma_q}\dx,
\end{align}
where 
%Let us denote by 
$\gamma_p=s-p$ and $\gamma_q=s-q$. Clearly, $0<\gamma_p<\gamma_q < s-1$. Now, we consider the following three situations.

\noindent{\bf Case 1.} Suppose that $\gamma_q<\gamma$. 

Note that,
%$$R^{-p}\zeta^{k-p}u^{\gamma-\gamma_p}= \big(R\zeta^{1-\frac{k \gamma_p}{p\gamma}}\big)^{-p} \zeta^{k\frac{\gamma-\gamma_p}{\gamma}} u^{\gamma-\gamma_p},$$
$$R^{-p}\zeta^{k-p}u^{\gamma-\gamma_p}=u^{\gamma-\gamma_p}\zeta^{\frac{k(\gamma-\gamma_p)}\gamma}R^{-p}\zeta^{k\frac{\gamma_p}\gamma-p},$$
so that, since $\gamma>\gamma_p$, applying Young's inequality with the pair $(\frac{\gamma}{\gamma-\gamma_p}, \frac{\gamma}{\gamma_p})$ we have
\begin{equation}\label{EL1.8B}
R^{-p}\int_{B_{2R}(x_0)} \zeta^{k-p} u^{\gamma-\gamma_p}\dx
\leq \frac{1}{4} \int_{B_{2R}(x_0)} u^\gamma \zeta^k\dx + C R^{-\frac{p\gamma}{\gamma_p}} \int_{B_{2R}(x_0)}  \zeta^{k-\frac{p\gamma}{\gamma_p}}\dx.
\end{equation}
Analogously, applying Young's inequality with the pair $(\frac{\gamma}{\gamma-\gamma_q}, \frac{\gamma}{\gamma_q})$ we have
$$ 
\int_{B_{2R}(x_0)} R^{-q}\zeta^{k-q} u^{q+\gamma-s}\dx
\leq \frac{1}{4} \int_{B_{2R}(x_0)} u^\gamma \zeta^k\dx + C R^{-\frac{q\gamma}{\gamma_q}} \int_{B_{2R}(x_0)}  \zeta^{k-\frac{q\gamma}{\gamma_q}}\dx.
$$
Set $k=\frac{p\gamma}{\gamma_p}$. This, of course, implies $k>\frac{q\gamma}{\gamma_q}$. Inserting the above two estimates into  \eqref{ET1.8B} yields that
$$ \int_{B_R(x_0)}u^{\gamma}  \dx\leq C (R^{N-\frac{p\gamma}{s-p}}+ R^{N-\frac{q\gamma}{s-q}})\leq 2C R^{N-\frac{q\gamma}{s-q}}.$$
Hence,  \eqref{EL1.8A} follows. 

\noindent{\bf Case 2.} Suppose that $\gamma=\gamma_q$.

Therefore, $\gamma>\gamma_p$ and thus \eqref{EL1.8B} continue to hold.   On the other hand, since now $ q+\gamma-s=0$ and being $k>q$ by the choice of $k$, then \eqref{ET1.8B} gives us
$$ \int_{B_{2R}(x_0)}u^{\gamma}  \zeta^k \dx\leq C (R^{N-\frac{p\gamma}{s-p}} + R^{N-q}).$$
Since $\frac{p\gamma}{s-p}>\frac{q\gamma}{s-q}=q$, we have $N-\frac{p\gamma}{s-p}\leq N-q$ and \eqref{EL1.8A} follows.

\noindent{\bf Case 3.} Suppose that $\gamma<\gamma_q$. 

In this case  applying  directly H\"{o}lder inequality with exponent $\frac{\gamma_q}{\gamma}$ and $\frac{\gamma_q}{\gamma_q-\gamma}$ we obtain
$$ \int_{B_R(x_0)} u^\gamma \dx
\leq C R^{N(1-\frac{\gamma}{\gamma_q})} \left(\int_{B_R(x_0)} u^{\gamma_q} \dx \right)^{\frac{\gamma}{\gamma_q}}
\leq C R^{N(1-\frac{\gamma}{\gamma_q})} \left(\int_{B_{2R}(x_0)} u^{\gamma_q}\zeta^k \dx \right)^{\frac{\gamma}{\gamma_q}}.
$$
Now from Case 2, we see that
$$ 
\int_{B_R(x_0)} u^\gamma \dx\leq C  R^{N(1-\frac{\gamma}{\gamma_q})} R^{(N-q)\frac{\gamma}{\gamma_q}}
= C R^{N-\frac{q\gamma}{s-q}}.
$$
This completes the proof.
\end{proof}

{\bf Proof of Theorem~\ref{t:s1}}:
\begin{proof}
(i) 
{\bf Case 1}: $s<q_*$.

Let $\Om$ be an exterior domain, that is, $\Om\supset\{|x|>R\}$ for some fixed $R>0$ and let $u$ be a positive solution to\eqref{main1}. Now we choose a sequence $x_j\in\Rn$ such that $|x_j|>3R$ and $|x_j|\to\infty$ as $j\to\infty$.
In particular, $B_{2R}(x_j)\subset\Omega$. As before, we set that $\gamma_q=s-q$. As  before, by $q>1$, it follows $0<\gamma_p<s-1$. We choose $$\gamma=\gamma_q,\qquad  B:=B_\frac{|x_j|}{4}(x_j)\subset\Om.$$
Then, from Lemma~\ref{L1.8}, we have
$$\int_{B}u^\gamma \dx\leq C|x_j|^{N-q}, \quad j=1,2,\cdots.$$
Therefore, 
$$\min_{\bar B}u^\gamma\leq\fint u^\gamma \dx\leq C|x_j|^{-q}, \quad j=1,2,\cdots.$$
Now we choose $y_j\in \bar{B}$ such that $\min_{\bar B}u^\gamma=u^\gamma(y_j)$.  This implies 
$$u^\gamma(y_j)\leq \frac{C}{|x_j|^q}.$$
Since, $y_j\in B$, it follows $\frac{3}{4}|x_j|\leq|y_j|\leq\frac{5}{4}|x_j|$. Therefore,
$$u^\gamma(y_j)\leq \frac{C}{|y_j|^q}.$$
Combining the above inequality with Lemma~\ref{L1.5}(i) with $\theta=\frac{N-q}{q-1}$, we have
$$\frac{C}{|y_j|^{\gamma\frac{N-q}{q-1}}}\leq\frac{C}{|y_j|^q}.$$
This yields a contradiction as $|y_j|\to\infty$ since $\gamma\,\frac{N-q}{q-1}= \gamma_q\,\frac{N-q}{q-1}<q$ by $s<q_*$. 

\smallskip

{\bf Case 2}: $s=q_*$.

Let $u$ be a positive solution to \eqref{main1} in the exterior domain $\Om\supset\{|x|>R_0\}$. We claim the following:

{\bf Claim:} Let $\mu_1\in(0, p(s-1)/s)$ and $\mu_2\in (0, q(s-1)/s)$. Then there exist a constant $C=C(N, p,q,s,\mu_1, \mu_2)>0$  such that for every ball
$B_{4R}(x_0)\subset\Om$ we have
\begin{align}
\int_{B_R(x_0)} |\na u|^{\mu_1} {\rm d}\theta &\leq CR^{N-\frac{qs\mu_1}{p(s-q)}}, 
\label{sphere}
\\
\int_{B_R(x_0)} |\na u|^{\mu_2} {\rm d}\theta &\leq CR^{N-\frac{s\mu_2}{(s-q)}}, 
\label{sphere1}
\end{align}
where ${\rm d}\theta$ is the surface measure on $\mathbb{S}^{N-1}$. 

We begin by proving the first inequality, and for simplicity, we denote $\mu_1=\mu$.
 
 Fix $\mu\in (0, p(s-1)/s)$ and  choose $\gamma\in (s-q, s-1)$ with $\gamma$  suitably near to $s-1$ so that $$\bar\gamma=\frac{(s-\gamma)\mu}{p-\mu}\in (0, s-1).$$ 
 Note that from \eqref{ET1.8B}, we have
\begin{equation}\label{E3.7}
\int_{B_{R}(x_0)} |\nabla u|^{p} u^{\gamma-s} \dx\leq  C R^{-p}\int_{B_{2R}(x_0)} \zeta^{k-p} u^{p+\gamma-s}\dx + C R^{-q}\int_{B_{2R}(x_0)} \zeta^{k-q} u^{q+\gamma-s}\dx
\end{equation}
for some constant $C$ and for all  ${B(x_0, 2R)}$ such that $B_{4R}(x_0)\subset \Omega$. 

Since $p+\gamma-s\in (0, \gamma)\subset (0, s-1)$,  indeed $\gamma>s-p$ by $p>q$, 
 from \eqref{EL1.8A} we obtain
\begin{align*}
R^{-p}\int_{B_{2R}(x_0)} \zeta^{k-p} u^{p+\gamma-s}\dx\leq R^{-p}\int_{B_{2R}(x_0)} u^{p+\gamma-s}\dx &\leq C R^{N-p-\frac{q(p+\gamma-s)}{s-q}}
\\
&\leq C R^{N-q-\frac{q(q+\gamma-s)}{s-q}} = CR^{N-\frac{q\gamma}{s-q}}.
\end{align*}
Analogously, 
$$R^{-q}\int_{B_{2R}(x_0)} \zeta^{k-q} u^{q+\gamma-s}\dx\leq CR^{N-\frac{q\gamma}{s-q}}.$$
Inserting these estimates in \eqref{E3.7} gives us
$$ \int_{B_{R}(x_0)} |\nabla u|^{p} u^{\gamma-s} \dx\leq C R^{N-\frac{q\gamma}{s-q}}.$$
Applying H\"{o}lder inequality and \eqref{EL1.8A} we arrive at
\begin{align*}
\int_{B_{R}(x_0)} |\nabla u|^{\mu}  \dx\leq \left(\int_{B_{R}(x_0)} |\nabla u|^{p} u^{\gamma-s} \dx\right)^{\frac{\mu}{p}}\left(\int_{B_{R}(x_0)} u^{\bar\gamma}\dx\right)^{1-\frac{\mu}{p}}
&\leq C R^{N-\frac{\gamma q}{s-q}\frac{\mu}{p}-\frac{\bar\gamma q}{s-q}\frac{p-\mu}{p}}
\\
& = CR^{N-\frac{qs\mu}{p(s-q)}},
\end{align*}
%With this estimate in hand, we can apply the argument of \cite[Lemma~2.5]{SZ} {\cgr which argument? It seems that \eqref{sphere} with $\mu=\mu_1$ is already proved..} to obtain \eqref{sphere}. 
namely \eqref{sphere} with $\mu_1=\mu$.

To establish \eqref{sphere1}
we use the third integral in \eqref{ET1.8B} and repeat the above proof  by choosing $\bar\gamma=\frac{(s-\gamma)\mu_2}{q-\mu_2}\in (0, s-1)$. Hence the claims follow.

With these estimates in hand, we can apply the argument of \cite[Lemma~2.5]{SZ}.

Set $\mu_1=p-1$ and $\mu_2=q-1$. Since $s=q_*=\frac{q(N-1)}{N-q}$, we get $\frac{s}{s-q}=\frac{N-1}{q-1}$. Again, $p>q$ implies $\frac{p}{p-1}<\frac{q}{q-1}\Rightarrow \frac{q(p-1)}{p(q-1)}>1$. Thus, from \eqref{sphere} and \eqref{sphere1} we obtain that
\begin{equation*}
\int_{B_R(x_0)} |\na u|^{p-1} + |\na u|^{q-1} {\rm d}x\leq C R,
\end{equation*}
whenever $B_{4R}(x_0)\subset \Om$. Using a simple covering argument (we can cover $B_{9R}(0)\setminus B_{8R}(0)$ by a finite number of
balls $B_{2R}(y_i)$ with $|y_i|=9R$) this leads to 
\begin{equation*}
\int_{B_{9R}(0)\setminus B_{8R}(0)} |\na u|^{p-1} + |\na u|^{q-1} {\rm d}x\leq C R,
\end{equation*}
for some constant $C$ and all $R>R_0$. For an increasing 
sequence $K_j\to \infty$, we write the above inequality in the polar coordinate as
\begin{equation*}
\int_{8K_j}^{9K_j} \bigg(\int_{\mathbb{S}^N}
(|\na u(t,\theta)|^{p-1} + |\na u(t, \theta)|^{q-1} )
{\rm d}\theta\bigg)t^{N-1} {\rm d}t \leq C K_j,
\end{equation*}
and applying mean-value theorem for integral, we find $R_j\in (8K_j, 9K_j)$ such that
%\footnote{\cred This $t$ below should be $R_j$}

$$K_j R_j^{N-1}\int_{\mathbb{S}^N}
(|\na u(R_j,\theta)|^{p-1} + |\na u(R_j, \theta)|^{q-1}) {\rm d}\theta\leq C K_j.$$
Therefore, there exists a sequence $R_j\to\infty$ such that
\begin{equation}\label{sphere2}
\int_{\mathbb{S}^N}
(|\na u(R_j,\theta)|^{p-1} + |\na u(R_j, \theta)|^{q-1} )
{\rm d}\theta \leq C R^{-(N-1)}_j 
\end{equation}
for all $j=1, 2,\ldots$.

\medskip Now we can complete the proof for $s=q_*$. We integrate \eqref{main1} on $B_{R_j}(0)\setminus B_{R_0}(0)$ we obtain
\begin{align}\label{E3.8}
\int_{B_{R_j}(0)\setminus B_{R_0}(0)} u^{q_*-1}\dx &\leq -\int_{\partial B_{R_j}(0)} |\nabla u|^{p-2}\nabla u\cdot \nu_1 {\rm d}\theta 
-\int_{\partial B_{R_j}(0)} |\nabla u|^{q-2}\nabla u\cdot \nu_1 {\rm d}\theta\nonumber
\\
&\quad -\int_{\partial B_{R_0}(0)} |\nabla u|^{p-2}\nabla u\cdot \nu_2 {\rm d}\theta -\int_{\partial B_{R_0}(0)} |\nabla u|^{q-2}\nabla u\cdot \nu_2 {\rm d}\theta,
\end{align}
where $\nu_1, \nu_2$ denote unit outward normal on $\partial B_{R_j}(0)$ and $\partial B_{R_0}(0)$, respectively. From Lemma~\ref{L1.5}(i), with $\theta=\frac{N-q}{N-1}$ we note that
$$u^{q_*-1}(x)\geq \kappa |x|^{-N}\quad \text{in}\; B^c_{R_0}(0)$$
for some $\kappa>0$, and therefore, the LHS of \eqref{E3.8} goes to infinity as $R_j\to \infty$. We show that the RHS of \eqref{E3.8} is bounded
, uniformly in $j$, which leads to a contradiction. This would complete the proof.

Again, to establish boundedness of the RHS of \eqref{E3.8}, it is enough to show that 
\begin{equation}\label{E3.9}
\int_{\partial B_{R_j}(0)} |\nabla u|^{p-1} {\rm d}\theta+ \int_{\partial B_{R_j}(0)} |\nabla u|^{q-1} {\rm d}\theta \leq C
\end{equation}
for all $j$. Using \eqref{sphere2} we see that
\begin{align*}
\int_{\partial B_{R_j}(0)} |\nabla u|^{p-1} {\rm d}\theta+ \int_{\partial B_{R_j}(0)} |\nabla u|^{q-1} {\rm d}\theta
= R_j^{N-1} \int_{\mathbb{S}^N} 
(|\na u(R_j, \theta)|^{p-1}+|\na u(R_j, \theta)|^{q-1})
{\rm d}\theta\leq C.
\end{align*}
Hence the proof.

%with $\mu=p-1$ and the fact $\frac{s}{s-q}=\frac{N-1}{q-1}$, we get
%\begin{align*}
%\int_{\partial B_{R_j}(0)} |\nabla u|^{p-1} {\rm d}\theta= R^{N-1}_j \int_{\mathbb{S}^{N-1}} |\na u(R_j,\theta)|^{p-1} {\rm d}\theta
%\leq C R^{-\frac{(N-1)(p-q)}{p(q-1)}}_j\leq C.
%\end{align*}

%To estimate the second integral we use this estimate above together with a H\"{o}lder inequality as follows
%\begin{align*}
%\int_{\partial B_{R_j}(0)} |\nabla u|^{q-1} {\rm d}\theta
%&\leq \left( \int_{\partial B_{R_j}(0)} |\nabla u|^{p-1} {\rm d}\theta \right)^{\frac{q-1}{p-1}} 
%|\partial B_{R_j}(0)|^{\frac{p-q}{p-1}}
%\\
%&\leq C R^{-\frac{(N-1)(p-q)}{p(p-1)}}_j R_j^{\frac{(N-1)(p-q)}{p-1}}
%\end{align*}

%In a similar fashion, using \eqref{sphere1} with $\mu=q-1$, we also have
%$$\int_{\partial B_{R_j}(0)} |\nabla u|^{q-1} {\rm d}\theta\leq C$$
%for $j=1, 2,\ldots$. This gives us \eqref{E3.9} and hence the proof is complete. 

\medskip

(ii) If $1<q<s\leq p$ and $u$ is a bounded positive solution, then we choose $\gamma=\gamma_q=s-q$ in \eqref{ET1.8B}. This yields us 
\begin{equation}\label{27Aug1}
\int_{B_{2R}(x_0)}u^{\gamma_q}  \zeta^k \dx\leq C(R^{N-p}+R^{N-q})\leq C'R^{N-q}.
\end{equation}
Then the theorem follows from Case 1 of part(i), since $s<q_*$ is in force. 

\end{proof}

\begin{lemma}\label{liminf} 
Let $g(t),\, t>0$ be a positive function with $\inf_{t\geq t_0}g(t)>0$ for all $t_0>0$. If $u$ is a nonnegative  solution,  to
$$\Depq \geq g(u)$$ in an exterior domain $\Om$, then $\lim\inf_{|x|\to\infty} u(x)=0$.
\end{lemma}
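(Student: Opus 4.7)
I would argue by contradiction, supposing $\liminf_{|x|\to\infty}u(x)>0$. Then there exist $c>0$ and $R_1>0$ with $u(x)\ge c/2$ on $\{|x|>R_1\}$, and by the hypothesis on $g$ we have $m:=\inf_{t\ge c/2}g(t)>0$, so that $-\Delta_p u-\Delta_q u\ge m>0$ in $\{|x|>R_1\}$. The strategy is to insert an explicit quadratic bump subsolution inside a large ball centered at a faraway point and invoke the comparison principle to force $u$ to grow polynomially in $|x|$, thereby reaching a contradiction with a finite positive $\liminf$, and then to rule out the remaining case $\liminf u=\infty$ by a radial integral identity.

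Concretely, for $|x_0|$ large set $R_0=(|x_0|-R_1)/2$ so that $B_{R_0}(x_0)\subset\{|x|>R_1\}$ and consider $v(x)=c/2+A(R_0^2-|x-x_0|^2)$. A direct computation gives (for $p\ge 2$; a simple regularization handles $p<2$)
$$-\Delta_p v-\Delta_q v=(2A)^{p-1}(N+p-2)|x-x_0|^{p-2}+(2A)^{q-1}(N+q-2)|x-x_0|^{q-2}.$$
Choosing $A=c_1 R_0^{-(p-2)/(p-1)}$ with $c_1=c_1(m,N,p,q)$ small makes the $p$-Laplacian contribution at most $m/2$ throughout $B_{R_0}(x_0)$. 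The $q$-Laplacian contribution scales like $R_0^{(q-p)/(p-1)}$ (a quick exponent count using $q<p$), which tends to $0$ as $R_0\to\infty$ and is hence also $\le m/2$ for $|x_0|$ large enough. Thus $-\Delta_p v-\Delta_q v\le m$ in $B_{R_0}(x_0)$ and $v=c/2\le u$ on $\partial B_{R_0}(x_0)$, so Lemma~\ref{l:comp1} yields $u(x_0)\ge v(x_0)=c/2+c_1 R_0^{p/(p-1)}\gtrsim|x_0|^{p/(p-1)}$.

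This growth estimate immediately rules out $\liminf u\in(0,\infty)$: every sequence $x_n\to\infty$ satisfies $u(x_n)\gtrsim|x_n|^{p/(p-1)}\to\infty$. To dispense with the remaining case $\liminf u=+\infty$, I would integrate the inequality against a smooth radial cutoff $\eta_R(x)=\chi(|x|)$ supported in $\{R_1<|x|<R+1\}$ and equal to $1$ on $\{R_1+1<|x|<R\}$; the divergence theorem then produces the identity
$$\int(|\nabla u|^{p-2}+|\nabla u|^{q-2})\partial_r u\,\chi'(|x|)\,dx\ \ge\ m\int\eta_R\ \gtrsim\ mR^N,$$
whose left-hand side splits into a bounded inner-shell contribution and an outer-shell contribution on $\{R<|x|<R+1\}$ where $\chi'<0$. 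Because $u\to\infty$ and $u\gtrsim|x|^{p/(p-1)}$ force $\bar u'(r)=\fint_{|x|=r}\partial_r u$ to be positive on average, the outer contribution has the wrong sign to support an $R^N$-growth lower bound, yielding the contradiction. The main obstacle will be making this sign argument rigorous for non-radial $u$: $\partial_r u$ and $|\nabla u|$ need not be aligned pointwise, so one must combine the above with a Caccioppoli-type estimate on $|\nabla u|$ to ensure that the $(|\nabla u|^{p-2}+|\nabla u|^{q-2})\partial_r u$-weighted integral inherits the positive sign from $\bar u'>0$ along a carefully chosen sequence of radii $R_n\to\infty$.
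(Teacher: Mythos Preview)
Your bump-plus-comparison idea is exactly the right device, and it is what the paper uses. But two things go wrong in your execution.

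First, the quadratic profile is not as robust as you suggest. When $q<2$ (which is allowed, since only $1<q<p$ is assumed), your formula gives $-\Delta_q v=(2A)^{q-1}(N+q-2)\,|x-x_0|^{q-2}$, which blows up at the center, so $v$ is \emph{not} a weak subsolution on the whole ball. The ``simple regularization'' is not so simple: flattening $v$ near the center produces a concave corner, and a concave kink contributes a \emph{positive} singular measure to $-\Delta_p v-\Delta_q v$, destroying the subsolution property. The clean fix --- and what the paper does --- is to use the profile adapted to the lower-order operator, $w(x)=2\varepsilon-c\,|x-y_j|^{q/(q-1)}$: then $-\Delta_q w$ is a constant and $-\Delta_p w$ grows only like $|x-y_j|^{(p-q)/(q-1)}$, hence is bounded on any fixed ball. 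This works for every $1<q<p$.

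Second, and more importantly, you use the bump to squeeze out the growth estimate $u(x_0)\gtrsim|x_0|^{p/(p-1)}$ by taking the ball radius as large as possible. That only rules out $\liminf u\in(0,\infty)$ and forces you into Part~2, which --- as you yourself concede --- does not close: for non-radial $u$ the weight $|\nabla u|^{p-2}+|\nabla u|^{q-2}$ need not align with the sign of $\partial_r u$, and there is no Caccioppoli-type upper bound available for mere supersolutions to rescue the argument. The paper sidesteps this entirely by deploying the bump differently. Writing $\varepsilon=\liminf u$ and picking $y_j\to\infty$ with $u(y_j)\to\varepsilon$, it fixes the bump height at $2\varepsilon$ and then chooses the ball radius $R$ (hence $c=2\varepsilon R^{-q/(q-1)}$) just large enough that both $-\Delta_p w$ and $-\Delta_q w$ fall below $\gamma/2$. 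That radius depends only on $\varepsilon$ and $\gamma$, so for $j$ large the ball fits inside the region where $-\Delta_p u-\Delta_q u\ge\gamma$, and comparison gives $u(y_j)\ge w(y_j)=2\varepsilon$, directly contradicting $u(y_j)\to\varepsilon$. No growth estimate and no Part~2 are needed.
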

\begin{proof}
Suppose, $\lim\inf_{|x|\to\infty} u(x)=\eps>0$. Let $y_j\in\Om$ such that $y_j\to\infty$ and $u(y_j)\to\eps$. Define, $\gamma=\inf_{t>\frac{\eps}{2}}g(t)$. Then for sufficiently large $|x|$,
$$(\Depq)(x)\geq g(u(x))\geq\gamma.$$
Next, we define $w(x):=c|x|^\frac{q}{q-1}$, where $c>0$ will be chosen later and 
$$w_\eps(x):=2\eps-w(x-y_j).$$

%{\color{blue} We will insert the image of the graph $w_\eps$ here.}

From the graph of $w_\eps$, it's clear that $\exists R=R_{\eps, c}>0$ such that 
$$w_\eps>0\quad\text{ in }\, B_R(y_j), \quad w_\eps=0\quad\text{ on }\, \partial B_R(y_j), \quad w_\eps<0 \quad\text{ in }\,\overline{B_R(y_j)}^c.$$
On $\partial B_R(y_j)$, $w_\eps=0$ implies 
\begin{equation}\label{9Aug1}
2\eps=cR_{\eps, c}^\frac{q}{q-1}.
\end{equation}
If $x\in  B_R(y_j)$ then
\begin{align}\label{9Aug2}
(-\De_p w_\eps-\De_p w_\eps)(x)&=(\De_p w+\De_q w)(x-y_j)\nonumber\\
&<\underbrace{c^{p-1}\big(\frac{q}{q-1}\big)^{p-1}\bigg(N-1+\frac{p-1}{q-1}\bigg)R_{\eps,c}^\frac{p-q}{q-1}}_{(I)}+\underbrace{c^{q-1}\big(\frac{q}{q-1}\big)^{q-1}N}_{(II)}.
\end{align}

We aim to choose $c>0$ and $R_{\eps, c}>0$ in such a way that $(I)<\frac{\gamma}{4}$ and 
 $(II)<\frac{\gamma}{4}$. To do this, we first substitute $c$ from \eqref{9Aug1} to $(I)$ and therefore we require
\begin{equation}\label{9Aug3}
\bigg(\frac{2\eps}{R_{\eps, c}^\frac{q}{q-1}}\bigg)^{p-1}AR_{\eps, c}^\frac{p-q}{q-1}<\frac{\gamma}{4},
\end{equation}
 where $A=\big(\frac{q}{q-1}\big)^{p-1}\big(N-1+\frac{p-1}{q-1}\big)$. \eqref{9Aug3} is equivalent to 
 \begin{equation}\label{9Aug4}
(2\eps)^{p-1}AR_{\eps, c}^{-p}<\frac{\gamma}{4}.
\end{equation}
Since $\gamma, \eps, A$ all are fixed constants, by choosing $R_{\eps,c}>>1$ we can achieve \eqref{9Aug3}. Once we have fixed $R_{\eps,c}$ and substitute in \eqref{9Aug1}, $c>0$ gets fixed. Also observe that $R>>1$ means $c<<1$. Therefore, while choosing $R_{\eps,c}$ we make sure that \eqref{9Aug4} is satisfied and $(II)<\frac{\gamma}{4}$. Hence, RHS of \eqref{9Aug2} becomes less than $\frac{\gamma}{2}$. Therefore,
$$-\De_p w_\eps-\De_p w_\eps\leq \Depq \quad\text{ in }\, B_{R_{\eps, c}}(y_j), \quad w_\eps=0<u \quad\text{ on }\, \partial B_{R_{\eps, c}}(y_j).$$
Thus applying Lemma~\ref{l:comp1} we obtain $w_\eps\leq u$ in $B_{R_{\eps, c}}(y_j)$. In particular, $w_\eps(y_j)\leq u(y_j)$, that is,  $2\eps\leq u(y_j)$.  This yields a contradiction to the definition of $\eps$ by choosing $y_j$ sufficiently large. Hence the lemma is proved.  
\end{proof}

\medskip{\bf Proof of Theorem~\ref{t:Nq}.}
Let $N\leq q$ and let $u$ be a positive solution of \eqref{main1}. Then, by Lemma~\ref{liminf}, we have
$\liminf_{|x|\to\infty} u(x)=0$. On the other hand, as $u$ becomes 
$(p,q)-$super harmonic function by Lemma~\ref{L1.5}(ii), it also follows
$\lim\inf_{|x|\to\infty} u(x)>0$. This yields a contradiction. Hence the theorem follows. 
\hfill $\Box$

\section{Gradient nonlinear terms}
In this section, we prove Theorem~\ref{T1.6}.

\medskip {\bf Proof of Theorem~\ref{T1.6}.}
%\begin{proof}
First, we assume
\begin{equation}\label{ET1.8AB}
    s(N-q)+m(N-1)<N(q-1).
\end{equation}
Let $\zeta\in C^2_c(B(0,1))$ be a radial cut-off function such that $\zeta\equiv 1$ in $B(0,\frac{1}{2})$, $|\nabla \zeta|\leq 2$, $|D^2\zeta|\leq C$. For $R>2$, we define $\varphi:=\zeta^k\big(\frac{|x-x_0|}{R}\big)$, where $x_0$ has been chosen arbitrarily and fixed and $k>0$ will be chosen later. Taking $\varphi$ as the test function for the above inequality we obtain
\begin{equation}\label{ET1.6B}
\int_{B_R}u^s |\nabla u|^m \varphi \dx \leq\int_{B_R} |\nabla u|^{p-2}\nabla u\cdot \nabla \varphi \dx + \int_{B_R} |\nabla u|^{q-2}\nabla u\cdot \nabla \varphi\dx.
\end{equation}
As $m>p-1>q-1$, using Young's inequality, we compute
\begin{align*}
\int_{B_R} |\nabla u|^{q-2}\nabla u\cdot \nabla \zeta^k \dx &\leq  \frac{k}{R} \int_{B_R} |\nabla u|^{q-1} \zeta^{k-1} |\nabla \zeta| \dx
\\
&= k\int_{B_R} |\nabla u|^{q-1} u^{s\frac{q-1}{m}} \zeta^{k(\frac{q-1}{m}+\frac{m-q+1}{m})} u^{-s\frac{q-1}{m}} \frac{|\nabla \zeta|}{R\zeta} \dx
\\
&\leq \frac{1}{4} \int_{B_R}u^s |\nabla u|^m \zeta^k \dx + 
C\int_{ A_R} \zeta^{k-\frac{m}{m-q+1}} u^{-s\frac{q-1}{m-q+1}} R^{-\frac{m}{m-q+1}} \dx,
\end{align*}
where $A_R$ denotes the annulus $\frac{R}{2}<|x|<R$. 
We set $k\geq \frac{m}{m-q+1}$. Thus $k> \frac{m}{m-p+1}$ as $p>q$. Also, set $\theta=\frac{N-q}{q-1}$ so that
$m\frac{N-1}{q-1} + s\theta<N$, by \eqref{ET1.8AB}. From Lemma~\ref{L1.5}, it then follows that
\begin{equation}\label{decay}
u(x)\geq \frac{C}{|x|^{\theta}}\quad\text{in }\, |x|>1.
\end{equation}
Therefore, assuming $R>1$
$$ \int_{B_R} |\nabla u|^{q-2}\nabla u\cdot \nabla \varphi\leq  \frac{1}{4} \int_{B_R}u^s |\nabla u|^m \zeta^k \dx
+ C R^{N-\frac{m}{m-q+1}+s\theta\frac{q-1}{m-q+1}}.
$$
A similar argument also gives us 
\begin{equation*}
\int_{B_R} |\nabla u|^{p-2}\nabla u\cdot \nabla \zeta^k\leq  \frac{1}{4} \int_{B_R}u^s |\nabla u|^m \zeta^k \dx
+ C R^{N-\frac{m}{m-p+1}+s\theta\frac{p-1}{m-p+1}}.
\end{equation*}
Inserting the above estimates in \eqref{ET1.6B} we arrive at
\begin{equation}\label{ET1.6C}
 \int_{B_{\frac{R}{2}}}u^s |\nabla u|^m  \dx\leq C R^{N+\frac{s\theta(p-1)-m}{m-p+1}}+ CR^{N+\frac{s\theta(q-1)-m}{m-q+1}}
\end{equation}
for all $R>2$. By our choice of $\theta$ we note that
$$N+\frac{s\theta(q-1)-m}{m-q+1}<0 \Longleftrightarrow m \frac{N-1}{q-1} + s\theta <N,$$
and since $m \frac{N-1}{p-1} + s\theta< m \frac{N-1}{q-1} + s\theta <N$, we also have $N+\frac{s\theta(p-1)-m}{m-p+1}<0$. Thus, letting
$R\to\infty$ in \eqref{ET1.6C}, we get $u$ to be a constant.

Next we suppose
$$s(N-q)+m(N-1)= N(q-1).$$
Letting $R\to\infty$ in \eqref{ET1.6C} it follows that
$$\int_{\mathbb R^N} u^s |\nabla u|^m \dx <\infty.$$
Therefore , there exists a sequence $R_j\to\infty$, such that 
$$\int_{B_{R_j}^c} u^s |\nabla u|^m \dx \to 0\quad 
\text{as}\;\; R_j\to\infty.$$
Now consider a sequence of cut-off functions $\varphi_j$ satisfying
$$\varphi_j=1\quad \text{in}\; B_{R_j}, \quad \varphi_j=0\quad \text{in}\; B^c_{2R_j}, \quad \text{and}
\quad \sup|\nabla \varphi_j|\leq \frac{C}{R_j},$$
where $C$ is independent of $R_j$. Using \eqref{ET1.6B}
we write
\begin{equation}\label{base-eq}
\begin{aligned}
\int u^{s}|\nabla u|^m\varphi_j \, \dx&\le \int_{\mathbb R^N}|\nabla u|^{p-1}|\nabla\varphi_j| \, \dx+ \int_{\mathbb R^N} |\nabla u|^{q-1}|\nabla\varphi_j| \, \dx.
\end{aligned}
\end{equation}
Let $A_{R_j}=B_{2R_j}\setminus B_{R_j}$ be the annulus where $\nabla\varphi_j$ is supported. Then , using \eqref{decay}, we estimate
$$
\begin{aligned}
 \int_{\mathbb R^N}|\nabla u|^{p-1}|\nabla\varphi_j| \, \dx 
 & = \int_{A_{R_j}} u^{-\frac{s(p-1)}{m}}|\nabla \varphi_j|\, u^{\frac{s(p-1)}{m}}|Du|^{p-1} \, \dx
 \\
 &\le C R^{\frac{N-q}{q-1}\frac{s(p-1)}{m}-1}\int_{A_{R_j}}  u^{\frac{s(p-1)}{m}}|\nabla u|^{p-1} \, \dx
 \\
 &\leq C R^{\frac{N-q}{q-1}\frac{s(p-1)}{m}-1+ \frac{N(m-p+1)}{m}} \left(\int_{A_{R_j}}  u^{s}|\nabla u|^{m} \, \dx\right)^\frac{p-1}{m}
 \\
 &\le C R^{\frac{N-q}{q-1}\frac{s(p-1)}{m}-1+ \frac{N(m-p+1)}{m}} \left(\int_{B^c_{R_j}}  u^{s}|\nabla u|^{m} \, \dx\right)^\frac{p-1}{m}.
\end{aligned}
$$
Using the relation $s(N-q)+m(N-1)= N(q-1)$, we see that 
$$
\frac{N-q}{q-1}\cdot\frac{s(p-1)}{m}-1+ \frac{N(m-p+1)}{m}= \frac{(N-1)(q-p)}{q-1}<0.
$$
A similar calculation also gives 
\begin{align*}
 \int_{\mathbb R^N}|\nabla u|^{q-1}|\nabla \varphi_j| \, \dx\leq  C   \left(\int_{B^c_{R_j}}  u^{s}|\nabla u|^{m} \, \dx\right)^\frac{q-1}{m}.
\end{align*}
Putting them in \eqref{base-eq}, we obtain
$$
\int u^{s}|\nabla u|^m\varphi_j \, \dx\leq C \left(\int_{B^c_{R_j}}  u^{s}|\nabla u|^{m} \, \dx\right)^\frac{p-1}{m} + 
C   \left(\int_{B^c_{R_j}}  u^{s}|\nabla u|^{m} \, \dx\right)^\frac{q-1}{m}.
$$
Now, by the property of $R_j$, the right-hand side above goes to $0$, proving $u$ to be a constant.

\hfill $\Box$

\medskip
 Note that for $s=0$, we do not require $u$ to be positive in the argument above  and in addition no use of Lemma~\ref{L1.5} is needed.
As an immediate corollary to the proof Theorem~\ref{T1.6}, we have (compare it with \cite[Corollary~1.3]{Gof} where a similar result is
established for the $p$-Laplacian)
\begin{cor}\label{Cor5.1}
Let $N\geq 2$,  $1<q<N$ and  $p-1< m\leq \frac{N(q-1)}{N-1}$. Assume $u$ is any solution of the inequality 
 $$ \Depq \geq  |\nabla u|^m \quad \text{ in }\Rn.$$ 
Then $u$ is constant.
\end{cor}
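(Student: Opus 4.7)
The plan is to observe that this corollary is the specialization of Theorem~\ref{T1.6} to the case $s=0$, so essentially nothing new needs to be proved beyond checking that the hypotheses of the theorem are satisfied under the stated assumptions. First I would fix a positive solution $u$ of $-\Delta_p u-\Delta_q u \geq |\nabla u|^m$ in $\mathbb{R}^N$ and rewrite the right-hand side as $u^s|\nabla u|^m$ with $s=0$, so that $u$ becomes a positive solution of \eqref{grad non} with $s=0$, $m>p-1$, $1<q<N$ and $N\geq 2$.

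Next I would verify the subcritical condition \eqref{L5.1A} required by Theorem~\ref{T1.6}: with $s=0$, the inequality $s(N-q)+m(N-1)<N(q-1)$ reduces precisely to $m(N-1)<N(q-1)$, i.e.\ $m<\frac{N(q-1)}{N-1}$, which is exactly the upper bound on $m$ in the hypothesis of the corollary. The remaining assumptions of Theorem~\ref{T1.6}, namely $N\geq 2$, $1<q<N$, $s=0\geq 0$ and $m>p-1$, are part of the hypotheses of the corollary. Therefore all hypotheses of Theorem~\ref{T1.6} are met.

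Applying Theorem~\ref{T1.6} directly yields that $u$ must be constant, and the corollary follows. Since this is a purely deductive step from the main theorem, there is no genuine obstacle; the only thing worth emphasizing is the compatibility of the admissible range of $m$ with \eqref{L5.1A}, which is guaranteed exactly by the two-sided bound $p-1<m<\frac{N(q-1)}{N-1}$ assumed in the statement. (Note that nonemptiness of this range additionally forces $p-1<\frac{N(q-1)}{N-1}$, but this is automatic given the hypothesis that a solution exists satisfying the bounds, and is not needed in the argument itself.)
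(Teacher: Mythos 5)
Your proposal is correct and matches the paper's intent exactly: the paper presents this corollary as an immediate consequence of Theorem~\ref{T1.6} specialized to $s=0$, and your verification that \eqref{L5.1A} with $s=0$ reduces precisely to $m<\frac{N(q-1)}{N-1}$ is the only check needed.
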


To prove Theorem~\ref{T1.7} we need the following improvement on the lower bound (compare with Lemma~\ref{L1.5}).

{\bf Proof of Lemma~\ref{L5.1}.}
%\begin{proof}
We note that by Lemma \ref{L1.5}, estimate \eqref{L5.1B} holds for any
$\theta\ge\frac{N-q}{q-1}$, hence it is enough to prove 
\eqref{L5.1B} only for 
$\theta\in (0, \frac{N-q}{q-1}\bigl)$.
To this aim, we use an iteration process which we describe next.

Let  $0<\sigma\leq \frac{N-q}{q-1}$  and suppose that
for some constant $C_{\sigma}$
we have
\begin{equation}\label{L5.1C}
u(x)\geq C_\sigma |x|^{-\sigma}\quad \text{in}\; \bar\Om\cap\{|x|\geq 1\}.
\end{equation}
This clearly holds for $\sigma=\frac{N-q}{q-1}$, by Lemma~\ref{L1.5}. Using \eqref{L5.1C}, we then have
\begin{equation}\label{L5.1D}
\Depq \geq \tilde{C}_\sigma |x|^{-s\sigma}|\na u|^m \quad \text{in}\;\; \Om\cap\{|x|\geq 1\}
\end{equation}
for $\tilde{C}_\sigma=(C_\sigma)^s$.

Then, take $\sigma_1$ such that $0<\sigma_1<\sigma$.
We claim that, for $\varphi_{\sigma_1}(x)= c|x|^{-\sigma_1}$, where $c$ is a positive constant, if we have
\begin{align}\label{L5.1E}
\max\{|\Delta_p\varphi_{\sigma_1}|, |\Delta_q\varphi_{\sigma_1}|\}<\frac{\tilde{C}_\sigma}{2} |x|^{-s\sigma}|\na \varphi_{\sigma_1}|^m
\quad\text{for } |x|\geq r_{\sigma_1},
\end{align}
and some $r_{\sigma_1}>1$, then for some constant $C_{\sigma_1}$ we have
\begin{equation}\label{L5.1F}
u(x)\geq C_{\sigma_1} |x|^{-\sigma_1}\quad \text{in}\;\; \bar\Om\cap\{|x|\geq 1\}.
\end{equation}
To prove this claim, since $u$ is positive in $\bar\Om$, it is enough to prove \eqref{L5.1F} for $|x|\geq r_{\sigma_1}$. We also set $r_{\sigma_1}$ large enough so that 
$\{|x|\geq r_{\sigma_1}\} \subset \Om$. Since $m\leq q-1$, from \eqref{L5.1E} we get for $\kappa\in (0, 1)$ that
$$-\Delta_p (\kappa \varphi_{\sigma_1}) -
\Delta_q (\kappa \varphi_{\sigma_1})
\leq \kappa^{p-1} |\Delta_p \varphi_{\sigma_1}| + 
\kappa^{q-1} |\Delta_q \varphi_{\sigma_1}|
< \tilde{C}_\sigma |x|^{-s\sigma} |\na (\kappa \varphi_{\sigma_1})|^m$$
for $|x|\geq r_{\sigma_1}$. Choose $\kappa>0$ small enough so that
$u(x)\geq \kappa \varphi_{\sigma_1}(x)$ on $|x|=r_{\sigma_1}$.
Let $\varepsilon\in (0, 1)$ be small and define $\psi=\kappa\varphi_{\sigma_1}-\varepsilon$. Then
\begin{equation}\label{L5.1G}
-\Delta_p \psi -
\Delta_q \psi< \tilde{C}_\sigma |x|^{-s\sigma} |\na \psi|^m
\quad \text{for}\; \; |x|\geq r_{\sigma_1}.
\end{equation}
Thus $\psi$ is a strict subsolution of $-\Delta_p v-\Delta_q v = \tilde{C}_\sigma |x|^{-s\sigma} |\na v|^m $ in $\{|x|\geq r_{\sigma_1}\}$. 

Next we show that $u\geq \psi$ in   $\{|x|\geq r_{\sigma_1}\}$ for all $\varepsilon\in (0, 1)$. Then, letting $\varepsilon\to 0$, we get
\eqref{L5.1F}. To prove the comparison (that is, $u\geq \psi$ in $\{|x|\geq r_{\sigma_1}\}$), we suppose, to the contrary, that $u(z)<\psi(z)$
for some $|z|>r_{\sigma_1}$. Note that, by our choice of $\kappa$, 
we have $u> \psi$ for $|x|=r_{\sigma_1}$. Again, since $\psi$ is negative for large $x$, we can find a positive constant $\kappa_1$
such that $\psi-\kappa_1\leq u$ in $\{|x|\geq r_{\sigma_1}\}$ and
for some $\bar{z}$, with  $|\bar z|>r_{\sigma_1}$, we have
$\psi(\bar{z})-\kappa_1=u(\bar{z})$. In particular, 
$\na u(\bar{z})=\na \psi(\bar z)\neq 0$. By
Lemma~\ref{L-vis} and \eqref{L5.1D},  we then have
$$
-\Delta_p \psi(\bar{z}) -\Delta_q \psi(\bar{z})=
-\Delta_p(\psi(\bar{z})-\kappa_1) -\Delta_q (\psi(\bar{z})-\kappa_1)
\geq \tilde{C}_\sigma |\bar{z}|^{-s\sigma} |\na \psi(z)|^m,
$$
which contradicts \eqref{L5.1G}. This gives us the comparison. Thus, {\it we have established that \eqref{L5.1C} and \eqref{L5.1E} give
\eqref{L5.1F}}.

Now we complete the proof of this lemma in three cases.

\noindent{\it Case 1.} Suppose $m=q-1$. Set $\sigma=\frac{N-q}{q-1}$. Therefore, from \eqref{L5.1A}, it follows that  $s\sigma <1$. 

By our choice of $\sigma$, then \eqref{L5.1C} holds from Lemma~\ref{L1.5}. Choose  $\theta\in (0, \sigma)$ and let
$\varphi_\theta(x)=|x|^{-\theta}$. It is easily seen that
(see Lemma~\ref{L1.5})
\begin{align*}
 -\De_p\varphi_\theta(x) & = \theta^{p-1} |x|^{-(\theta+1)(p-1)-1}\big(N-(\theta+1)(p-1)-1\big),
\\
-\De_q\varphi_\theta(x) & = \theta^{q-1} |x|^{-(\theta+1)(q-1)-1}\big(N-(\theta+1)(q-1)-1\big),
\\
|x|^{-s\sigma}|\na\varphi_\theta(x)|^m &=\theta^m |x|^{-s\sigma-(\theta+1)m}
\end{align*}
for $x\neq 0$. Since
$$(\theta+1)(p-1)+1> (\theta+1)(q-1)+1>  (\theta+1)m + s\sigma,$$
choosing $\theta=\sigma_1$, then \eqref{L5.1E} holds.  
Therefore, \eqref{L5.1B}
follows.

\noindent{\it Case 2.} Suppose that $m<q-1$ and for some 
$\sigma\in (0, \frac{N-q}{q-1}]$ we have $s\sigma \leq q-m$  by \eqref{L5.1A}. Also, assume that \eqref{L5.1C} holds  (we'll prove it later). To check \eqref{L5.1E} we choose
$\theta=\sigma_1<\sigma$.
Since
\begin{align*}
(\theta+1)(p-1)+1> (\theta+1)(q-1)+1 &= (\theta+1)(q-m) + m(\theta+1)  -\theta
\\
&> s\sigma + m(\theta+1),
\end{align*}
\eqref{L5.1E} follows. In particular, if we have $q-m\geq s\frac{N-q}{q-1}$, we get \eqref{L5.1B}. 

\noindent{\it Case 3.} Suppose that $m<q-1$ and $s\frac{N-q}{q-1}> q-m$.
Define the function
$$\mathfrak{g}(t)=\frac{s t-1}{q-1-m}-1=\frac{s t}{q-1-m}-\frac{q-m}{q-1-m}.$$
Note that $\mathfrak{g}(t)>0$ for $st>q-m$. Set $\sigma_0=\frac{N-q}{q-1}$ and $\sigma_k=\mathfrak{g}(\sigma_{k-1})$ for $k=1, 2, \ldots$. Also,
\begin{align*}
\mathfrak{g}(\sigma_0)<\sigma_0
&\Leftrightarrow s\frac{N-q}{q-1} < \frac{(N-1)(q-1-m)}{q-1} + 1
\\
& \Leftrightarrow s(N-q) - (q-1) < (N-1)(q-1-m)
\\
& \Leftrightarrow s(N-q) + m(N-1)< N(q-1),
\end{align*}
where the last inequality is assured by \eqref{L5.1A}. Again, using
the linearity of $\mathfrak{g}$ we have
$$ \sigma_{k+1}-\sigma_k=\left(\frac{s}{q-1-m}\right)^k(\sigma_1-\sigma_0)<0.$$
Thus, $\{\sigma_k\}$ forms a strictly decreasing sequence. Since
$$(\sigma_{k+1} +1)(p-1) + 1> (\sigma_{k+1} +1)(q-1) + 1 
= s\sigma_k + (\sigma_{k+1} +1) m\quad \text{and} \quad m< q-1,
$$
as soon as $\sigma_{k+1}>-1$, then taking $c$ small enough we see that \eqref{L5.1E} hold for
$\varphi_{\sigma_{k+1}}(x)=c |x|^{-\sigma_{k+1}}$, provided  we set  $\sigma=\sigma_k$. 

We start by considering $k=0$, so that, our assumption $s\sigma_0>q-m$
gives $\sigma_1=g(\sigma_0)>0$, in turn the above inequality becomes 
$$(\sigma_{1} +1)(p-1) + 1> (\sigma_{1} +1)(q-1) + 1 
= s\sigma_0 + (\sigma_{1} +1) m,$$
%so that
%\eqref{L5.1D} with $\sigma=\sigma_0$ yields 
yielding  \eqref{L5.1E} with 
$\sigma=\sigma_0$. In turn,
 \eqref{L5.1F} is in force, provided that \eqref{L5.1C}  holds with $\sigma=\sigma_0$.  In other words, \eqref{L5.1B} holds for $\theta=\sigma_1$.

Next,  to show  \eqref{L5.1B} for all $\theta$, we can continue this iteration to improve the lower bound of $u$ to arrive in the situation of Case 2. 

We claim that there exists
$m\geq 1$ such that $s\sigma_m\in (0, q-m]$ and \eqref{L5.1C} holds
for $\sigma=\sigma_m(<\sigma_0)$. Once we establish this, we are in the setup of Case 2 and therefore, \eqref{L5.1B} follows. To prove the last claim, first we suppose that $s/(q-1-m)<1$. In this case, $\mathfrak{g}$ is a contraction map, with a negative fixed point.
Since $\sigma_k$ tends to this fixed point, by contraction mapping theorem, there exists $m\geq 1$ satisfying 
$s\sigma_m\in (0, q-m]$ and $s \sigma_{m-1}>q-m$. Furthermore, when
$s/(q-1-m)\geq 1$, we get 
$$ \sigma_{k+1}-\sigma_k=\left(\frac{s}{q-1-m}\right)^k(\sigma_1-\sigma_0)< \sigma_1-\sigma_0<0.$$
Therefore, $\sigma_k\to -\infty$, otherwise if the limit would be finite  we reach immediately a contradiction  since at every iteration $\sigma_k$ decreases by a fixed constant. 
Thus, there exists $m\geq 1$ satisfying 
$s\sigma_m\in (0, q-m]$ and $s \sigma_{m-1}>q-m$. This proves our claim and completes the proof.
\hfill $\Box$
%\end{proof}

\medskip

{\bf Proof of Theorem~\ref{T1.7}.}
For the proof we use the same idea as in Theorem~\ref{t:qbig}.
Choose $\theta>0$ such that $s\theta<1$.
Let $\bar\theta\in (0, 1)$. Therefore
\begin{equation}\label{T5.1A}
(\bar\theta-1)( q-1-m) + s\theta< 
(\bar\theta-1)( p-1-m) + s\theta<1.
\end{equation}

For $ \varepsilon \in (0, 1)$ we now define
$\varphi_\varepsilon(x)=-\varepsilon |x|^{\bar\theta}$. 
Next we show that for given $\kappa>0$, there exists $r_0>0$ satisfying
\begin{equation}\label{T5.1B}
-\Delta_p\varphi_{\eps} -\Delta_q \varphi_{\eps}< \kappa|x|^{-s\theta} |\na \varphi_{\eps}|^m\quad \text{for}\; |x|\geq r_0,
\end{equation}
uniformly in $\varepsilon$. A straightforward calculation 
reveals that
\begin{align*}
&-\Delta_p\varphi_{\eps} -\Delta_q \varphi_{\eps} - 
\kappa|x|^{-s\theta} |\na \varphi_{\eps}|^m
\\
&= -\varepsilon^{m}|x|^{(\bar\theta-1)m-s\theta}
\Bigl[\bar\theta^{p-1} \varepsilon^{p-1-m} |x|^{(\bar\theta-1)(p-1-m) + s\theta-1}(N-1+(\bar\theta-1)(p-1))  
\\
&\qquad +\bar\theta^{q-1} \varepsilon^{q-1-m} |x|^{(\bar\theta-1)(q-1-m) + s\theta-1}(N-1+(\bar\theta-1)(q-1)) +\kappa \bar\theta^m \Bigr]
\\
&\leq \varepsilon^{m}|x|^{(\bar\theta-1)m-s\theta}
\Bigl[\bar\theta^{p-1} |x|^{(\bar\theta-1)(p-1-m) + s\theta-1}|N-1+(\bar\theta-1)(p-1)| 
\\
&\qquad +\bar\theta^{q-1} |x|^{(\bar\theta-1)(q-1-m) + s\theta-1}|N-1+(\bar\theta-1)(q-1)| -\kappa \bar\theta^m \Bigr].
\end{align*}
From \eqref{T5.1A} we note that the exponents of $|x|$ in the above expression are negative. Hence we can choose $r_0$ large enough, depending on $\bar\theta$ and $\kappa$, so that \eqref{T5.1B} holds.  Note that $r_0$ is independent of $\varepsilon$. 

 Now let $u$ be any positive solution of \eqref{grad non}, applying Lemma~\ref{L5.1} we see that
\begin{equation}\label{T5.1C}
\Depq \geq \kappa |x|^{-s\theta}|\na u|^m 
\end{equation}
for $|x|\geq 1$ and some $\kappa>0$. We use this $\kappa$ in 
\eqref{T5.1B} and adjust $r_0>1$ accordingly. Next we show that
\begin{equation}\label{T5.1D}
u(x)\geq \min_{\bar{B}_{r_0}} u:=\varrho_0  \quad\text{in } \mathbb R^N.
\end{equation}
To establish \eqref{T5.1D} we let $\tilde\varphi_{\eps}:=\varrho_0+\varphi_\varepsilon$. From \eqref{T5.1B} we see that
$$
-\Delta_p\tilde\varphi_{\eps} -\Delta_q \tilde\varphi_{\eps}< \kappa|x|^{-s\theta} |\na \tilde\varphi_{\eps}|^m\quad \text{for}\; |x|\geq r_0.
$$
Applying the comparison argument, as in Lemma~\ref{L5.1}  with $\psi$ replaced by  $\tilde\varphi_{\eps}$, we get 
$u\geq \tilde\varphi_{\eps}$ in $B^c_{r_0}$ for all $\varepsilon\in (0, 1)$. 
%If not, then for some $\varepsilon\in (0, 1)$ we must have $u(z)<\tilde\varphi(z)$ for some
%$|z|>r_0$. Since $\tilde\varphi(x)\to -\infty$ as $|x|\to \infty$, we can find a positive constant $\kappa_1$ such that
%$\tilde\varphi-\kappa_1\leq u$ in $B^c_{r_0}$ and 
%there exists a point $\bar{z}, |\bar{z}|>r_0,$ satisfying
%$\tilde\varphi(\bar{z})-\kappa_1= u(\bar{z})$. Since
%$\na u(\bar{z})=\na \tilde\varphi(\bar{z})\neq 0$, applying
%\cite[Lemma 3]{BBF} we get that
%$$
%-\Delta_p\tilde\varphi(\bar{z}) -\Delta_q \tilde\varphi(\bar{z})
%\geq \kappa |\bar{z}|^{-\frac{s(N-q)}{q-1}} |\na \tilde\varphi(z)|^m.$$
%But this is a contradiction. Hence, we must have 
%$u\geq \tilde\varphi$ in $B^c_{r_0}$ for all $\varepsilon\in (0, 1)$. 
Letting $\varepsilon\to 0$, we obtain \eqref{T5.1D}.
 Since $u$ attains its minimum in $\bar{B}_{r_0}$,  using Lemma~\ref{R1} it then follows that $u\equiv \varrho_0$, which completes the proof.
\hfill $\Box$

\medskip

\begin{proof}[{\bf Proof of Theorem~\ref{t:s2}}.]
The proof is based on an argument of contradiction. Let $u$ be a nonnegative solution to \eqref{main1} with $1<s<q_*$. By Lemma~\ref{liminf}, we have $\liminf_{|x|\to\infty} u(x)=0$. %We note that \eqref{T1.6A} is satisfied since $s\leq q<\frac{q(N-1)}{N-q}$.

Again, since $\Omega$ is an
exterior domain, applying Lemma~\ref{L5.1}  with $m=0$, we get that for any $\theta>0$ there exists $\kappa>0$ such that
$$ u(x)\geq \kappa |x|^{-\theta}\quad \text{for all}\; |x|\geq R_0.$$
From \eqref{main1} this leads to 
\begin{equation}\label{new1}
\Depq\geq \kappa^{s-1} |x|^{-\theta(s-1)}\quad \text{for}\; |x|\geq R_0.
\end{equation}
As done in the proof of Theorem~\ref{T1.7}, we choose $\theta>0$ such that $s\theta<1$ and $\bar\theta$ to satisfying \eqref{T5.1A}  with $m=0$ and with $(s-1)$
in place of $s$. Then, for $\varphi_\varepsilon(x)=-\varepsilon|x|^{\bar\theta}$, there exists $r_0>R_0$ such that
\begin{equation}\label{new2}
-\Delta_p\varphi_{\eps} -\Delta_q \varphi_{\eps}< \kappa^{s-1} |x|^{-\theta(s-1)} \quad \text{for}\; |x|\geq r_0,
\end{equation}
uniformly in $\varepsilon$. Let $\tilde\varphi_\varepsilon= \min_{|x|=r_0} u +\varphi_\varepsilon$. Note that $\tilde\varphi_\varepsilon$ is also a subsolution to the above equation. Now following the arguments of Theorem~\ref{T1.7} we find that
$u\geq \tilde\varphi_\varepsilon$ for $|x|\geq r_0$. Letting $\varepsilon\to 0$, we obtain $u(x)\geq \min_{|x|=r_0} u$ for all $|x|\geq r_0$.
In particular, $\liminf_{|x|\to\infty} u(x)>0$, which contradicts our conclusion of Lemma~\ref{liminf}. Therefore, no such supersolution $u$ could 
exist. Hence the proof.
\end{proof}

\medskip

\bigskip

\medskip

{\bf Funding:} This research of M.~Bhakta
is partially supported by DST Swarnajaynti fellowship (SB/SJF/2021-22/09), ANRF-ARG-MATRICS grant (ANRF/ARGM/2025/000126/MTR)and \\
INdAM-ICTP joint research in pairs program  for 2025. A.~Biswas is partially supported by a DST Swarnajaynti fellowship (SB/SJF/2020-21/03).
Filippucci is a member of the {\em Gruppo Nazionale per l'Analisi Ma\-te\-ma\-ti\-ca, la Probabilit\`a e le loro Applicazioni} (GNAMPA) of the {\em Istituto Nazionale di Alta Matematica} (INdAM) and was  partly supported  by
 INdAM-ICTP joint research in pairs program for 2025.
M.~Bhakta and R. Filippucci would like to thank the warm hospitality of ICTP,  the travel support and daily allowances provided by INdAM-ICTP.  

\medskip

{\bf Data availability:} Data sharing not applicable to this article as no datasets were generated or analyzed during the current study.

\medskip

{\bf Conflict of interest} The authors have no Conflict of interest to declare that are relevant to the content of this article.

\medskip

\end{document}